\newcommand{\R}{\mathbb{R}}
\newcommand{\C}{\mathbb{C}}
\newcommand{\Q}{\mathbb{Q}}
\newcommand{\N}{\mathbb{N}}
\newcommand{\Z}{\mathbb{Z}}
\newcommand{\A}{\mathcal{A}}
\newcommand\pprec{\prec\mkern-5mu\prec}
\newcommand{\K}{\mathcal{K}}
\renewcommand{\a}{\mathfrak{a}}
\renewcommand{\d}{\mathfrak{d}}
\newcommand{\m}{\mathfrak{m}}
\newcommand{\n}{\mathfrak{n}}
\newcommand{\p}{\mathfrak{p}}
\newcommand{\sumw}{\sideset{}{^\wedge}\sum}
\renewcommand{\epsilon}{\varepsilon}
\theoremstyle{plain}
\newtheorem{theorem}{Theorem}
\newtheorem{lemma}[theorem]{Lemma}
\newtheorem{prop}[theorem]{Proposition}
\newtheorem{cor}[theorem]{Corollary}
\newtheorem{definition}{Definition}
\theoremstyle{remark}
\newtheorem{remark}{Remark}
\numberwithin{equation}{section}
\begin{document}
\title{A Cubic analogue of the Friedlander-Iwaniec spin over primes}
\author{Jori Merikoski}
\address{Mathematical Institute,
University of Oxford,
Andrew Wiles Building,
Radcliffe Observatory Quarter,
Woodstock Road,
Oxford,
OX2 6GG}
\email{jori.merikoski@maths.ox.ac.uk}
\subjclass[2020]{11N32 primary, 11N36 secondary}

\begin{abstract} In 1998 Friedlander and Iwaniec proved that there are infinitely many primes of the form $a^2+b^4$. To show this they used the Jacobi symbol to define the spin of Gaussian integers, and one of the key ingredients in the proof was to show that the spin becomes equidistributed along Gaussian primes. To generalize this we define the cubic spin of ideals of $\Z[\zeta_{12}]=\Z[\zeta_3,i]$ by using the cubic residue character on the Eisenstein integers $\Z[\zeta_3]$. Our main theorem says that the cubic spin is equidistributed along prime ideals of $\Z[\zeta_{12}]$. The proof of this follows closely along the lines of Friedlander and Iwaniec. The main new feature in our case is the infinite unit group, which means that we need to show that the definition of the cubic spin on the ring of integers lifts to a well-defined function on the ideals. We also explain how the cubic spin arises if we consider primes of the form $a^2+b^6$ on the Eisenstein integers.
\end{abstract}

\maketitle

\tableofcontents
\section{Introduction}
Friedlander and Iwaniec \cite{FI} famously showed that there are infinitely many prime numbers represented by $a^2+b^4$. Remarkable here is that numbers of this form are very sparse, that is, the number of such integers up to $x$ is of order $x^{3/4}$. Other similar results are Heath-Brown's proof that there are infinitely many primes of the form $a^3+2b^3$ \cite{hb}, the generalization of Heath-Brown and Moroz of this to binary cubic forms \cite{hbm}, the extension of this by Maynard to general incomplete norm forms \cite{maynard}, and the result of Heath-Brown and Li \cite{hbli} that the Friedlander-Iwaniec result holds also with $b$ restricted to prime values.

If a prime $p$ is of the form $a^2+b^4$, then $p=\pi \overline{\pi}$ for some Gaussian prime $\pi=b^2+ia$, so that the arithmetic in the work of Friedlander and Iwaniec really lies in $\Z[i]$. For a Gaussian integer $z=r+is$ with $r$ odd, define the quadratic spin as
\begin{align*}
[z]_2 := \bigg( \frac{s}{r}\bigg)_2,
\end{align*}
where $(s/r)_2$ is the Jacobi symbol. One of the key ingredients in the proof of Friedlander and Iwaniec is to show that the spin is equidistributed along Gaussian primes \cite[Theorem 2]{FI}, which they obtained in the form
\begin{align} \label{gaussspin}
\sum_{\substack{p=r^2+s^2 \leq x \\ 2 \, \nmid \, r}} \bigg( \frac{s}{r}\bigg)_2 \ll x^{1-1/77}.
\end{align}
This has been generalized by Milovic to show equidistribution of $(v/u)_2$ over primes of the form $p=u^2-2 v^2$, which corresponds to the above with $\Z[\sqrt{2}]$ in place of $\Z[i]$ \cite[Theorem 2]{milovic}.

It is natural to ask if the argument can be extended to produce primes of the form $a^2+b^6$. Friedlander and Iwaniec have solved the ternary divisor problem for this sequence \cite{fidivisor}, and under the assumption of the existence of exceptional Dirichlet characters they have shown that there are infinitely many primes of this form \cite{fiillusory}.

At the moment there seems to be two major obstacles to solving the problem of primes of the form $a^2+b^6$. Firstly, the sequence of integers is now too sparse to replicate the steps in \cite[Sections 5-9]{FI}. The second problem is structural. Recall that the proof of (\ref{gaussspin}) relies on the law of quadratic reciprocity. With the sequence $a^2+b^6$ we end up with cubic residues which unfortunately do not satisfy a suitable reciprocity law on $\Z$.

The second obstacle can be overcome if we extend the whole set-up from $\Z$ to the Eisenstein integers $\Z[\zeta_3]$, where the cubic residue character does satisfy a reciprocity law (see Lemma \ref{cubicreciprocitylemma}).  Unfortunately the first issue remains and we are not able to detect primes of $\Z[\zeta_3]$ of the form $a^2+b^6$ with $a,b \in \Z[\zeta_3]$ (see  Section \ref{primessection} for more details). However, we can still obtain the analogue of (\ref{gaussspin}) in this situation and thus make partial progress on this problem. The Gaussian integers now correspond to the ring $\Z[\zeta_{12}]= \Z[\zeta_3,i]$ of integers of the twelfth cyclotomic extension, since the relative norm is $N_{\Q(\zeta_{12})/\Q(\zeta_{3})}(r+is) = r^2+s^2$ for $r,s \in \Q(\zeta_{3})$.

We say that $z \in \Z[\zeta_{12}]$ is primary if $z \equiv \pm 1 \, (\text{mod} \, 3)$. For any $(z,3)=1$ there exists a unit $\mu$ such that $\mu z$ is primary. For a primary number $z=r+is \in \Z[\zeta_{12}]$ we define the cubic spin
\begin{align*}
[z]_3 := \bigg[ \frac{s}{r}\bigg]_3,
\end{align*}
where $[s/r]_3$ is the cubic residue character on $\Z[\zeta_3]$ (see Section \ref{cubicsection} for details). We extend this to the ideals $\a$ of $\Z[\zeta_{12}]$ by defining $[\a]_3 := [z]_3$ if $z$ is a primary generator of $\a$ with $(r,s)=1$ and set $[\a]_3=0$ otherwise. In Section \ref{12section} we will show that this definition does not depend on the choice of the primary associate $z$  (note that by Dirichlet's unit theorem there are infinitely many possible choices). Our main theorem says that the values of the cubic spin are equidistributed along prime ideals of $\Z[\zeta_{12}]$.
\begin{theorem} \label{maintheorem}
We have
\begin{align*}
\sum_{N_{\Q(\zeta_{12})} \,\p \leq x} [\p]_3 \, \ll \, x^{1-1/143}.
\end{align*}
\end{theorem}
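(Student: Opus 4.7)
The plan is to follow the Friedlander--Iwaniec strategy closely, adapted to the ring $\Z[\zeta_{12}]$, exploiting the ideal-level well-definedness of $[\a]_3$ established in Section \ref{12section}.

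The first step is a combinatorial reduction. I would apply a Vaughan- or Heath--Brown-type decomposition of the von Mangoldt function on the ideals of $\Z[\zeta_{12}]$ to reduce Theorem \ref{maintheorem} to power-saving estimates for a Type~I sum
\[
S_I \;=\; \sum_{N\a \leq A}\sum_{N\b \leq B} \alpha_\a \, [\a\b]_3, \qquad AB \asymp x, \quad A \text{ short},
\]
with $|\alpha_\a| \leq 1$, and for a Type~II bilinear sum
\[
S_{II} \;=\; \sum_{N\a \leq A}\sum_{N\b \leq B} \alpha_\a \beta_\b \, [\a\b]_3, \qquad AB \asymp x,
\]
with $A, B$ in a middle range and $|\alpha_\a|, |\beta_\b| \leq 1$. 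The exponent $1/143$ is then obtained by balancing the permissible widths of these ranges against the savings produced below.

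For the Type~I sum I would use cubic reciprocity (Lemma \ref{cubicreciprocitylemma}) to flip $[s/r]_3$ so that the long variable appears in the denominator; the inner sum then becomes an incomplete cubic character sum on $\Z[\zeta_3]$, which I would control by a P\'olya--Vinogradov style bound over a lattice image of $\Z[\zeta_{12}]$. For the Type~II sum I would apply Cauchy--Schwarz in the outer variable and arrive at
\[
|S_{II}|^2 \;\ll\; A \sum_{\b_1, \b_2} \, \sum_{N\a \leq A} [\a \b_1]_3 \, \overline{[\a\b_2]_3},
\]
where the diagonal $\b_1 = \b_2$ contributes the expected trivial term. The off-diagonal is again attacked by cubic reciprocity: it converts the inner sum into a sum of a cubic character of conductor essentially dividing $\b_1 \overline{\b_2}$ over a lattice of $\a$, and cancellation comes from character-sum estimates in $\Z[\zeta_3]$ together with averaging over $\b_1, \b_2$.

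The main obstacle, as flagged in the abstract, is the infinite unit group of $\Z[\zeta_{12}]$. Each ideal has infinitely many primary generators differing by units of infinite order, so when one unfolds sums over ideals into sums over elements in order to invoke reciprocity, one must truncate to a fundamental domain for the unit action without destroying the cancellation in the character sums. The fact that $[\a]_3$ is well-defined on ideals (Section \ref{12section}) keeps the reciprocity correction factor consistent across representatives, but propagating this fundamental domain through the Type~II Cauchy--Schwarz and re-expressing the inner character sum in an ideal-invariant way is the genuinely new technical step compared with \cite{FI}. The complex-valued nature of $[\cdot]_3$, so that $\overline{[\cdot]_3} = [\cdot]_3^2$ instead of $1$, is a minor point that does not affect the argument since $[\cdot]_3^2$ is itself a nontrivial cubic character.
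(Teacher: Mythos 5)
Your high-level outline matches the paper's strategy: a combinatorial decomposition (the paper uses Buchstab's identity rather than Vaughan's, though it explicitly notes this choice is inessential), twisted multiplicativity via cubic reciprocity (Lemma \ref{twistmultiplemma}), a P\'olya--Vinogradov style estimate for Type~I, Cauchy--Schwarz for Type~II, and careful fixing of primary generators (Section \ref{fixingsection}) to tame the infinite unit group. However, there is a genuine gap in your Type~II treatment that would prevent the argument from closing. After Cauchy--Schwarz, the resulting off-diagonal cube condition on the moduli $q_1 q_2^2 = N_{12/3}(w_1 w_2^2)$ gives a bound of the shape $Q(M,N) \pprec M^5 + M^{1/2} N$ (the paper's Lemma \ref{prelimtypeiilemma}), which is non-trivial only in the highly asymmetric regime $N \gg M^4$. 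This range is far too narrow to feed into the sieve, which needs power-saving Type~II cancellation for essentially all $M,N$ with $\min(M,N) \gg x^{\eta}$. The essential missing ingredient is the H\"older amplification: one raises the sum to the $k$th power, applies the preliminary bound to a sum with $N$ replaced by $N^k$, and obtains $Q(M,N) \pprec M^{1+4/k} + M^{1-1/2k} N$; with $k=5$ and the reciprocity $(z/w)=(w/z)$ (a consequence of Lemma \ref{twistmultiplemma}) to symmetrize in $M$ and $N$, this gives $Q(M,N) \pprec MN^{9/10} + M^{9/10} N$, which is what makes Proposition \ref{typeiiprop} usable in the sieve. Your phrase ``cancellation comes from character-sum estimates together with averaging over $\b_1,\b_2$'' glosses over this amplification step entirely.

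A secondary imprecision: you ascribe the exponent $1/143$ to ``balancing the permissible widths'' of the ranges, but the numerical degradation relative to Friedlander--Iwaniec's $1/77$ is driven specifically by the Type~I estimate. In $\Z[\zeta_3]$ the short-character-sum bound must be applied through a smoothed truncated Poisson summation (Lemma \ref{poissonlemma}), and transitioning from the smooth cutoff back to the sharp boundary of the fundamental domain introduces a lattice-point boundary error. This yields the weaker Type~I bound $\pprec N^{11/12} N_{12}(w)^{1/6}$ rather than the square-root saving available over $\Z$, and that loss, not the combinatorics, is what forces the final exponent down to $1-1/142$.
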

Similarly as in \cite[Theorem 2]{FI}, the exponent 1/143 is not the best that could be obtained and we have opted for simplicity in the proof over optimality.

The above cubic spin and the spin of Friedlander and Iwaniec \cite{FI} should not be confused with the spin of a prime ideal as defined by Friedlander, Iwaniec, Mazur, and Rubin \cite{fimr}.

Note that for a given prime $\p=(r+is)$ all of its Galois conjugates appear in the sum, which means that the sum is real. Indeed, for $r+is$ primary we have
\begin{align*}
\sum_{\sigma \in \text{Gal}(\Q(\zeta_{12})/\Q)} [\sigma (r+is)]_3 = \bigg[ \frac{s}{r}\bigg]_3+ \bigg[ \frac{-s}{r}\bigg]_3 +\bigg[ \frac{\bar{s}}{\bar{r}}\bigg]_3+\bigg[ \frac{-\bar{s}}{\bar{r}}\bigg]_3 = 4 \,  \text{Re} \bigg( \bigg[ \frac{s}{r}\bigg]_3 \bigg)
\end{align*} 
by using the properties $[\bar{s}/\bar{r}]_3 = \overline{[s/r]}_3$ and $[-s/r]_3=[s/r]_3$.

Note also that for $\pi = r^2+s^2$ with  $r+is$ primary we have by cubic reciprocity (Lemma \ref{cubicreciprocitylemma})
\begin{align} \label{rcube}
\bigg[\frac{s}{r}\bigg]=\bigg[\frac{s^2}{r}\bigg]^2=\bigg[\frac{\pi}{r}\bigg]^2 = \bigg[\frac{r}{\pi}\bigg]^2,
\end{align}
since $r+is$ being primary implies that $r$ and $\pi$ are primary in $\Z[\zeta_3]$. Thus, our main theorem implies that $r$ is a cube modulo primes $\pi$ asymptotically one third of the time (to prove this, expand $1_{r\equiv t^3 \, \pmod \pi} = (1+[r/\pi] + [r/\pi]^2)/3$ and note that $\sum [\p]_3^2 = \overline{\sum [\p]_3}$).
\begin{cor} \label{cubecorollary} We have
\begin{align*}
\sum_{ \substack{N_{\Q(\zeta_3)} \pi \leq x \\ \pi=r^2+s^2, \\r \equiv \pm 1 \, \pmod 3, \,\, 3|s}} 1_{r \equiv t^3 \, \pmod \pi } = \frac{1}{3}\sum_{ \substack{N_{\Q(\zeta_3)} \pi \leq x \\ \pi=r^2+s^2, \\r \equiv \pm 1 \, \pmod 3, \,\, 3|s}} 1 + O(x^{1-1/143}).
\end{align*}
\end{cor}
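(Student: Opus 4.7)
The plan is the one sketched in the discussion preceding the corollary: detect the cubic residue condition $r \equiv t^3 \pmod \pi$ by cubic orthogonality, convert the resulting character sums to the spin $[\p]_3$ via the reciprocity identity (\ref{rcube}), and invoke Theorem \ref{maintheorem} to bound the error. There is no obstacle beyond Theorem \ref{maintheorem} itself; the remaining work is bookkeeping.

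First, since $(r,\pi)=1$ (from $(r,s)=1$ together with $\pi = r^2+s^2$), orthogonality of the cubic residue character modulo $\pi$ yields
\begin{align*}
1_{r \equiv t^3 \, \pmod \pi} = \frac{1}{3}\bigg( 1 + \bigg[\frac{r}{\pi}\bigg]_3 + \bigg[\frac{r}{\pi}\bigg]_3^2 \bigg).
\end{align*}
By (\ref{rcube}) we have $[s/r]_3 = [r/\pi]_3^2$, and since $[s/r]_3$ is a cube root of unity this inverts to $[r/\pi]_3 = [s/r]_3^2 = \overline{[s/r]_3}$ and $[r/\pi]_3^2 = [s/r]_3$. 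Writing $\p = (r+is)$ so that $[\p]_3 = [s/r]_3$, the two nontrivial terms combine to $[\p]_3 + \overline{[\p]_3}$.

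Second, I would match the $\pi$-sum with the $\p$-sum of Theorem \ref{maintheorem}. Each primary Eisenstein prime $\pi = r^2+s^2$ splits in $\Z[\zeta_{12}]$ as $\pi = \p \bar\p$, with $\bar\p = (r-is)$ also primary (since $3 \mid -s$), and the identity $[-s/r]_3 = [s/r]_3$ quoted in the excerpt gives $[\bar\p]_3 = [\p]_3$. Inert primes contribute zero to the spin, since any generator is of the form $\mu \pi$ and hence has both $\Z[\zeta_3]$-coordinates divisible by $\pi$. Since $N_{\Q(\zeta_{12})}\p = N_{\Q(\zeta_3)}\pi$ the ranges of summation match, and therefore
\begin{align*}
\sum_\pi [\p]_3 \,=\, \frac{1}{2}\sum_{N_{\Q(\zeta_{12})}\p \leq x} [\p]_3 \,\ll\, x^{1-1/143}
\end{align*}
by Theorem \ref{maintheorem}. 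Summing the orthogonality identity over the $\pi$ in the corollary and inserting this bound produces the claimed asymptotic. The only subtlety is the $\p \leftrightarrow \bar\p$ pairing and the well-definedness of $[\a]_3$ on ideals, both of which are handled by the invariance $[-s/r]_3 = [s/r]_3$ and the result of Section \ref{12section}.
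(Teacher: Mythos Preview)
Your proposal is correct and follows exactly the approach the paper sketches in the parenthetical before the corollary: expand $1_{r\equiv t^3 \pmod \pi} = (1+[r/\pi] + [r/\pi]^2)/3$, convert via (\ref{rcube}) to spins $[\p]_3$ and $\overline{[\p]_3}$, and bound using Theorem~\ref{maintheorem} together with $\sum [\p]_3^2 = \overline{\sum [\p]_3}$. Your explicit matching of the $\pi$-sum with half the $\p$-sum (via $[\bar\p]_3=[\p]_3$ and the vanishing of the spin at inert primes) is just a more detailed version of what the paper leaves implicit.
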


\begin{remark} Since we prove that $[\a]_3$ is independent of the choice of the primary generator $z=r+is$, this means by (\ref{rcube}) that for primes $\pi=r^2+s^2$ the property that $r \equiv t^3 \, \pmod \pi$ is independent of the representation $\pi=r^2+s^2$ where $r+is$ is primary, so that the sum in the above corollary is well-defined. The corollary may be viewed as an approximation to the problem of primes of the form $t^6+s^2$ on $\Z[\zeta_3]$ -- instead of $r$ being a perfect cube, it is a cube modulo $\pi=r^2+s^2$.
\end{remark}

For any integer $n \geq 1$ define
\begin{align*}
\lambda_3(n):= \sum_{N_{\Q(\zeta_{12})} \,\a = n} [\a]_3.
\end{align*}
For rational primes we get the following corollary of Theorem \ref{maintheorem} (the error term we get from the proof of Theorem \ref{maintheorem} is actually $O_\epsilon(x^{1-1/142+\epsilon})$ so that the same error term holds for the corollary below).
\begin{cor}
We have
\begin{align*}
\sum_{n \leq x} \Lambda(n) \lambda_3(n)  \ll \, x^{1-1/143}.
\end{align*}
\end{cor}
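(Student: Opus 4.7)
The plan is to derive the corollary from Theorem \ref{maintheorem} by partial summation, after splitting the $\Lambda$-sum into contributions from rational primes and from proper prime powers. Opening the definitions gives
\begin{align*}
\sum_{n \leq x} \Lambda(n) \lambda_3(n) = \sum_{p \leq x} \log p \sum_{N\a = p} [\a]_3 \,+\, \sum_{\substack{p^k \leq x \\ k \geq 2}} \log p \sum_{N\a = p^k} [\a]_3,
\end{align*}
and I would analyze the two sums separately.

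The second sum is handled trivially. Since $[\Q(\zeta_{12}):\Q]=4$, at most four prime ideals of $\Z[\zeta_{12}]$ lie above any rational prime, so the number of ideals of norm $p^k$ is $O(k^3)$. Combined with $|[\a]_3|\leq 1$ and the fact that there are only $O(\sqrt{x}/\log x)$ proper prime powers up to $x$, this part contributes $O(\sqrt{x}(\log x)^3)$, which is much smaller than $x^{1-1/143}$.

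For the first sum, an ideal $\a$ with $N\a$ equal to a rational prime $p$ must itself be a prime ideal $\p$ of residue degree one, so the sum equals
\begin{align*}
\sum_{\substack{N\p \leq x \\ f(\p)=1}} \log(N\p)\,[\p]_3.
\end{align*}
Extending to all prime ideals costs only $O(\sqrt{x}\log x)$, since primes of residue degree $\geq 2$ have norm $\geq p^2$ and so number at most $O(\sqrt{x})$ up to $x$. Partial summation against Theorem \ref{maintheorem}, using the sharper bound $O_\epsilon(x^{1-1/142+\epsilon})$ stated in the preamble to the corollary, then produces a total error of $O_\epsilon(x^{1-1/142+\epsilon}\log x)$, which the slack from $1-1/142+\epsilon$ down to $1-1/143$ is precisely designed to absorb.

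There is essentially no obstacle beyond this bookkeeping: the corollary is a formal consequence of the main theorem. The only point that needs care is the relationship between the exponents $1/142$ and $1/143$, and this is arranged so that the logarithmic loss from partial summation and the $\epsilon$ from the bound on $\lambda_3$ on prime powers both fit comfortably in the gap.
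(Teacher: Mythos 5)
Your argument is correct and is essentially the same as the paper's, which dismisses the corollary with the parenthetical remark that the underlying exponent $1-1/142+\epsilon$ from the proof of Theorem \ref{maintheorem} absorbs the logarithmic loss. Your decomposition into degree-one prime ideals, higher-degree prime ideals, and proper prime powers, followed by partial summation, is exactly the routine bookkeeping that the remark leaves implicit.
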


\begin{remark} \label{genremark} The proof of Theorem  \ref{maintheorem} relies mainly on the law of cubic reciprocity. Thus, it seems plausible that the result can be generalized as follows. If an algebraic number field $K$ contains a primitive $m$th root of unity, then we can define the $m$th power residue character on $K$ which satisfies a reciprocity law (see \cite[Chapter VIII, Theorem 5.11]{milne}, for instance). Given a quadratic extension $L/K$ we can then define a spin at elements of $\mathcal{O}_L$. It seems plausible that the argument could be generalized to obtain equidistribution of the spin along principal prime ideals of $L$. We hope to attack this question in a future article. Probably some assumptions are required here. At least in the simplest case of $K=\Q(\zeta_m)$ with $m$ odd prime and $L=K[i]$ many parts of the argument seem to generalize nicely.
\end{remark}

\begin{remark} In \cite[Section 23]{FI} Friedlander and Iwaniec write "We suspect, but have not examined thoroughly, that $\lambda(n)$ are
related to the Fourier coefficients of some kind of metaplectic Eisenstein series or a cusp form, by analogy to the Hecke eigenvalues (16.30) which generate a modular form of integral weight." Similarly, we expect that $\lambda_3(n)$ can be interpreted in terms of automorphic forms. Working out the details of this would be useful with a view towards the generalization outlined in the previous remark.
\end{remark}

\begin{remark} We suspect that our main theorem has some applications to elliptic curves over $\Z[\zeta_3]$ but we do not have anything particularly interesting. For example, if $\pi=r^2+s^2$ is a prime and $r \equiv t^3 \, \pmod \pi$, then Corollary \ref{cubecorollary} provides us with a large family of elliptic curves $E: \,Y^2=X^3+3t^2 X  \pm 2s$ with bad reduction at some large prime $\pi=\pi(E)$.
\end{remark}
\subsection{A brief sketch of the Friedlander-Iwaniec argument}
We present here a non-rigorous sketch of the proof of (\ref{gaussspin}) which appears in \cite[Sections 19-26]{FI}. Recall that the claim is that
\begin{align*}
\sum_{\substack{ z \in \Z[i] \\|z|^2=p \leq x}} [z]_2 \ll x^{1-1/77},
\end{align*}
where $[r+is]_2 = (s/r)_2$ is the usual Jacobi symbol. The summation needs to be restricted to odd $r$ but let us ignore this in the notation to simplify the presentation. Then by a sieve argument (essentially Vaughan's identity) the task is reduced to bounding Type I sums
\begin{align} \label{typeifi}
\sum_{\substack{ |w|^2 \sim M}}\alpha_w \sum_{\substack{ |z|^2 \sim N}} [wz]_2 
\end{align}
and Type II sums
\begin{align} \label{typeiifi}
\sum_{\substack{ |w|^2 \sim M}}\alpha_w \sum_{\substack{ |z|^2 \sim N}} \beta_z [wz]_2,
\end{align}
where $\alpha_w$ and $\beta_z$ are arbitrary bounded coefficients and $MN=x$ with sufficiently flexible ranges for $M$ and $N$.

For $w=u+iv$ with $(u,v)=1$, let $\omega \equiv - vu^{-1} \,\, \pmod{u^2+v^2}$, where $u^{-1}$ denotes the multiplicative inverse, so that $\omega^2 \equiv -1 \,\, \pmod{u^2+v^2}$. Similarly as in \cite[Section 19]{FI}, for any $z=r+is$ we define
\begin{align*}
\bigg(\frac{z}{w}\bigg)_2:=\bigg(\frac{r+\omega s}{u^2+v^2}\bigg)_2.
\end{align*} 
Since $\omega^2 \equiv -1 \,\, \pmod{u^2+v^2}$, this symbol is completely multiplicative in the upper variable and is therefore a quadratic character modulo $u^2+v^2$. We also have $(z/w_1)_2(z/w_2)_2= (z/w_1 w_2)_2$ provided that $(w_1,\overline{w_2})=1$. The general multiplicativity rule in the lower part is not much more complicated, but to simplify let us pretend that the symbol is completely multiplicative also in the lower varliable.

The key lemma is \cite[Lemma 20.1]{FI}, which morally states that the quadratic spin is twisted multiplicative in the sense that
\begin{align*}
[wz]_2 = \mathcal{E} [w]_2[z]_2 \bigg(\frac{z}{w}\bigg)_2
\end{align*}
for some simple sign factor $\mathcal{E}$. The proof of this relies on quadratic reciprocity multiple times. To simplify the presentation we pretend that this holds with $\mathcal{E}=1$. Then bounding the Type I and Type II sums is reduced to (absorbing factors into the coefficients $\alpha_w$ and $\beta_w$)
\begin{align*} 
\sum_{\substack{ |w|^2 \sim M}}\alpha_w \sum_{\substack{ |z|^2 \sim N}} [z]_2\bigg(\frac{z}{w}\bigg)_2 \quad \quad \text{and} \quad \quad
\sum_{\substack{ |w|^2 \sim M}}\alpha_w \sum_{\substack{ |z|^2 \sim N}} \beta_z \bigg(\frac{z}{w}\bigg)_2.
\end{align*}

For the Type I sums (see \cite[Section 22]{FI}) we fix $w$ and write
\begin{align*}
\bigg|\sum_{\substack{ |z|^2 \sim N}} [z]_2\bigg(\frac{z}{w}\bigg)_2\bigg| \leq \sum_{r \ll \sqrt{N}} \bigg| \sum_{\substack{ s^2 \sim N-r^2}} \bigg[\frac{s}{r}\bigg]_2\bigg(\frac{r+s\omega}{u^2+v^2}\bigg)_2 \bigg| \\
=\sum_{r \ll \sqrt{N}} \bigg| \sum_{\substack{ s  \in I(r)}} \bigg(\frac{s}{r(u^2+v^2)}\bigg)_2 \bigg|
\end{align*}
by making making the change of variables $s \mapsto s + \omega r$, where $I(r)$ denotes an interval of length $\ll \sqrt{N}$. The sum over $s$ can be bounded using the P\'olya-Vinogradov inequality for short character sums, which yields a non-trivial bound for the Type I sums (\ref{typeifi}) in the range $M \leq x^{1/3-\eta}$ for any $\eta>0$.

To handle the Type II sums (see \cite[Section 21]{FI}) we use Cauchy-Schwarz to morally get
\begin{align*}
\sum_{\substack{ |w|^2 \sim M}}\alpha_w \sum_{\substack{ |z|^2 \sim N}} \beta_z \bigg(\frac{z}{w}\bigg)_2 \ll N^{1/2} \bigg( \sum_{\substack{ |w_1|^2,  |w_2|^2\sim M}}\alpha_{w_1}\alpha_{w_2} \sum_{\substack{ |z|^2 \sim N}} \bigg(\frac{z}{w_1 w_2}\bigg)_2\bigg)^{1/2}.
\end{align*}
Since $(z/w_1w_2)_2$ is a quadratic character modulo $|w_1w_2|^2$, the sum over $z$ is very small unless $|w_1w_2|^2$ is a perfect square (at least for $N$ large compared to $|w_1w_2|^2 \asymp M^2$). The part where $|w_1w_2|^2$ is a perfect square is a very narrow subset of the variables, which gives a non-trivial bound for the Type II sums. This bound can be amplified by a suitable application of H\"older's inequality and by the reciprocity $(z/w)_2=(w/z)_2$. We get a non-trivial bound for the Type II sums (\ref{typeiifi}) in the full range $x^{\eta} \ll M,N \ll x^{1-\eta}$.

\subsection{Structure of the article}
The proof of Theorem \ref{maintheorem} follows the same lines as the proof of \cite[Theorem 2]{FI}. In Section \ref{cubicsection} we recall the law of cubic reciprocity and prove that the cubic spin $[z]_3$ satisfies a twisted multiplicativity relation (Lemma \ref{twistmultiplemma}). This relation is the key ingredient in all of the arguments that follow. In Section \ref{12section} we recall basic facts about $\Q(\zeta_{12})$ and show that the definition of the spin $[\a]_3=[z]_3$ is independent of the choice of primary generator $z$ of $\a$ (also for this we need Lemma \ref{twistmultiplemma}). 

In Section \ref{sievesection} we use Buchstab's identity to obtain a decomposition of the prime sum into sums of Type I and Type II. We could also use similar arguments as in \cite{FI} to this end. In Section \ref{fixingsection} we explain how to choose unique primary generators for the ideals of $\a$ in a consistent manner.

After these steps the arugment is essentially same as in \cite{FI} with only minor modifications. In Sections \ref{typeisection} and \ref{typeiisection} we compute the Type I and Type II sums, respectively, which by Section \ref{sievesection} completes the proof of Theorem \ref{maintheorem}. For this we need a version of the Poisson summation on $\Z[\zeta_3]$, which is given in Section \ref{poissonsection}. The reason why the exponent in Theorem \ref{maintheorem} is worse than that in \cite[Theorem 2]{FI} is solely because in the Type I sums we essentially get a contribution from the error term in a lattice point counting problem on $\Z[\zeta_3]$.

Lastly, in Section \ref{primessection} we illustrate non-rigorously how the cupic spin arises from the problem of primes of the type $\alpha^2+\beta^6$ on $\Z[\zeta_3]$. The arguments follow the same lines as in \cite{FI}. We also explain why the density issue prevents us from completing the goal of detecting primes of this form. 
 
\subsection{Notations}
For functions $f$ and $g$, we write $f \ll g$ or $f= \mathcal{O}(g)$ if there is a constant $C$ such that $|f|  \leq C |g|.$ The notation $f \asymp g$ means $g \ll f \ll g.$ The constant may depend on some parameter, which is indicated in the subscript (e.g. $\ll_{\epsilon}$).
We write $f=o(g)$ if $f/g \to 0$ for large values of the variable. For variables we write $n \sim N$ meaning $N<n \leq 2N$. 

For two functions $f$ and $g$ with $g \geq 0$, it is convenient for us to denote  $f(N) \pprec g(N)$ if $f(N) \ll_\epsilon N^{\epsilon}g(N)$. A typical bound we use is $\tau_k(n) \pprec 1$, where $\tau_k$ is the $k$-fold divisor function. For multivariable functions such as sums over two variables we write
\begin{align*}
\sum_{\substack{m \sim M \\ n \sim N}} f(m,n) \pprec \sum_{\substack{m \sim M \\ n \sim N}} g(m,n)
\end{align*}
to mean
\begin{align*}
\sum_{\substack{m \sim M \\ n \sim N}} f(m,n) \ll_\epsilon (M+N)^{\epsilon} \sum_{\substack{m \sim M \\ n \sim N}} g(m,n).
\end{align*}
 We say that an arithmetic function $f$ is divisor bounded if $|f(n)| \ll \tau_k(n)$ for some $k$.

For a statement $E$ we denote by $1_E$ the characteristic function of that statement. For a set $A$ we use $1_A$ to denote the characteristic function of $A.$ 

We let $e(x):= e^{2 \pi i x}$ and $e_q(x):= e(x/q)$ for any integer $q \geq 1$.  We abbreviate modular arithmetic such as $a \equiv b \pmod c$ by $a \equiv b \, (c)$. For any $(a,b)=1$ we let $a^{-1} \, (b)$ denote the multiplicative inverse, so that $a a^{-1} \equiv 1 \, (b)$.

We abbreviate the norm maps as follows. For any $a=a^{(1)}+a^{(2)} \zeta_3 \in \Z[\zeta_3],$ $a^{(j)} \in \Z$, and $\zeta=r+is \in \Z[\zeta_{12}]$, $r,s \in \Z[\zeta_3]$ we set
\begin{align*}
&N_{3}(a):= N_{\Q(\zeta_3)}(a) = (a^{(1)})^2 - a^{(1)} a^{(2)} +  (a^{(2)})^2 = |a|^2, \\
 N_{12/3} (\zeta)&:= N_{\Q(\zeta_{12})/\Q(\zeta_{3})}(\zeta)= r^2+s^2, \quad \text{and} \quad N_{12} := N_{\Q(\zeta_{12})} = N_{3} \circ \N_{12/3}.
\end{align*}

\subsection{Acknowledgements}
I am grateful to my supervisor Kaisa Matom\"aki for helpful comments and support. I also thank Joni Ter\"av\"ainen for comments on an early version of the manuscript. During the work the author was funded by UTUGS Graduate School. Part of the article was also completed while I was working on projects funded by the Academy of Finland (project no. 319180) and the Emil Aaltonen foundation.

\section{Cubic reciprocity} \label{cubicsection}
In this section we recall basic properties of the Eisenstein integers $\Z[\zeta_3]$ and the cubic residue character (cf. \cite[Chapter 7]{lemm}, for instance). We also prove a twisted multiplicativity rule for the cubic spin $[z]_3$ (Lemma \ref{twistmultiplemma}) by using the cubic reciprocity law. To simplify the notation we will abbreviate modular arithmetic such as $a \equiv b \pmod c$ by $a \equiv b \, (c)$.

The unit group of $\Z[\zeta_3]$ is the group of sixth roots of unity $\{\pm 1, \pm \zeta_3, \pm \zeta_3^2\}$. We say that an integer  $a \in \Z[\zeta_3]$ is primary if $a \equiv \pm 1 \, (3)$. Equivalently, $a=a^{(1)} + a^{(2)} \zeta_3$ with $a^{(j)}  \in \Z$ is primary if $3| a^{(2)}$ and $a^{(1)} \equiv \pm 1 \, (3).$ For any $(a,3)=1$ there exists a unit $\mu$ such that $\mu a \equiv 1 \, (3)$.

Any  rational prime $p \equiv 1 \, (3)$ splits as $p=\pi \bar{\pi}$ for a prime $\pi \in \Z[\zeta_3]$. Then for any $a \in \Z[\zeta_3]$, $\pi \nmid a$, we have by Fermat's Little Theorem 
\begin{align*}
a^{p-1} \equiv 1 \,\, (\pi).
\end{align*}
Since $p \equiv 1 \, (3)$, we see that $a^{(p-1)/3} \equiv \zeta_3^k \, (\pi)$ for some $k \in \{0,1,2\}$, so that we may define the cubic residue character modulo $\pi$
\begin{align*}
\bigg[ \frac{a}{\pi} \bigg]_3 := \zeta_3^k.
\end{align*}
If $\pi| a$ we set $[a/\pi]_3 := 0.$
The rational primes $q \equiv 2 \, (3)$ are inert and we define $[a/q]_3:= 1$ if $q \nmid a$ and $[a/q]_3 :=0$ if $q|a$.  For any unit $\mu$ of $\Z[\zeta_3]$ we set $[a/\mu]_3 := 1$, and for the prime $1-\zeta_3$ we set $[a/(1-\zeta_3)]_3 := 1$ (recall that $3=-\zeta_3^2 (1-\zeta_3)^2$ is the only prime that ramifies). Then for any prime $\tau \in \Z[\zeta_3]$ the congruence $x^3 \equiv a \, (\tau)$ has a non-zero solution if and only if $[a/\tau]_3=1$.

For any non-zero $\lambda \in \Z[\zeta_3]$ we have a unique factorization
\begin{align*}
\lambda = \pm \zeta_3^k (1-\zeta_3)^\ell \pi_1^{\alpha_1} \cdots \pi_m^{\alpha_m} q_1^{\beta_1} \cdots q_n^{\beta_n},
\end{align*}
where $\pi_j \equiv 1 \, (3)$, and $q_j \equiv 2 \, (3)$ are rational primes. Therefore, we may extend $[\cdot/\pi]_3$ to all of $\Z[\zeta_3]$ multiplicatively
\begin{align*}
\bigg[ \frac{a}{\lambda} \bigg]_3  := \bigg[ \frac{a}{\pi_1} \bigg]_3^{\alpha_1}  \cdots \bigg[ \frac{a}{\pi_k} \bigg]_3^{\alpha_k}. 
\end{align*}
It is then clear that this is completely multiplicative in both variables. From here on we simplify notations by ignoring the subscript 3, that is, we write $[a/b] := [a/b]_3$.

For any $a,b \in \Z[\zeta_3]$ we write $(a,b)=1$ if $a$ and $b$ are coprime. For any $(a,b)=1$ we let $\epsilon(a,b)$ denote the cubic root of unity such that
\begin{align*}
\bigg[\frac{a}{b}\bigg] =\epsilon(a,b)\bigg[\frac{b}{a}\bigg],
\end{align*}
Note that for any $a,b,c \in \Z[\zeta_3]$ with $(a,bc)=1$ we have multiplicativity in the sense that
\begin{align*}
\epsilon(a,bc)=\epsilon(a,b) \epsilon(a,c) \quad \text{and} \quad \epsilon(bc,a) = \epsilon(b,a)\epsilon(c,a).
\end{align*}
Note also that $\epsilon(a,b)=\epsilon(a,b)^{-1} = \epsilon(b,a)^2$.

By \cite[Theorem 7.8]{lemm} we have the following cubic reciprocity law (which can also be found in \cite[Chapter VIII, Example 5.13]{milne}).
\begin{lemma} \label{cubicreciprocitylemma}\emph{\textbf{(Cubic reciprocity).}} Let $a, b \in \Z[\zeta_{3}]$ be coprime. If $a$ and $b$ are primary, then $\epsilon(a,b)=1$. If $a=a^{(1)}+a^{(2)}\zeta_3$ is primary, then depending on the sign of $a^{(1)} \equiv \pm 1 \,(\text{\emph{mod}} \, 3)$ we have 
\begin{align*}
\epsilon(\zeta_3, a)= \zeta_3^{(1 \pm (-a^{(1)}-a^{(2)}))/3}, \quad \epsilon(1-\zeta_3, a)= \zeta_3^{(\pm a^{(1)}-1)/3}, \quad \epsilon(3,a)= \zeta_3^{\pm a^{(2)}/3},
\end{align*}
so that for any $c \in \Z[\zeta_3]$ we have
\begin{align*}
\epsilon(\zeta_3, a+ 9c)=\epsilon(\zeta_3, a) \quad \text{and} \quad  \epsilon(1-\zeta_3, a+ 9c)=\epsilon(1-\zeta_3, a).
\end{align*}
\end{lemma}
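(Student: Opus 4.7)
The plan is to derive Lemma \ref{cubicreciprocitylemma} by combining the classical cubic reciprocity for primary primes (cited from \cite[Theorem 7.8]{lemm} or \cite[Chapter VIII, Example 5.13]{milne}) with the multiplicativity of $\epsilon$ and the conventions $[a/\mu]=[a/(1-\zeta_3)]=1$ established earlier in the section; the mod-$9$ periodicity will then be a direct check.

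For the reciprocity $\epsilon(a,b)=1$ on primary coprime $a,b$, I first reduce to the case of prime arguments. Write $a=\prod_i p_i$ in $\Z[\zeta_3]$ and replace each prime $p_i$ by its primary associate $\pi_i=\nu_i p_i$; since $a\equiv\pm 1 \pmod 3$ and the only units congruent to $\pm 1 \pmod 3$ are $\pm 1$ (the units $\pm\zeta_3,\pm\zeta_3^2$ are not), we obtain $a=\pm\prod_i\pi_i$ with each $\pi_i$ a primary prime. Do the same for $b=\pm\prod_j\theta_j$. Because $\pm 1=(\pm 1)^3$ is a cube we have $[\pm 1/x]=1$, so by multiplicativity $[a/b]=\prod_{i,j}[\pi_i/\theta_j]$ and $[b/a]=\prod_{i,j}[\theta_j/\pi_i]$; it therefore suffices to quote the prime-to-prime reciprocity $[\pi/\theta]=[\theta/\pi]$ for coprime primary primes, which is precisely the classical theorem cited above.

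For the supplementary laws, the conventions of this section give $\epsilon(\zeta_3,a)=[\zeta_3/a]$, $\epsilon(1-\zeta_3,a)=[(1-\zeta_3)/a]$, and, via $3=-\zeta_3^2(1-\zeta_3)^2$ together with $[a/3]=1$, also $\epsilon(3,a)=[\zeta_3/a]^2[(1-\zeta_3)/a]^2$; in particular the claimed formula for $\epsilon(3,a)$ is an algebraic consequence of the other two (one checks directly that $\zeta_3^{2(1\pm(-a^{(1)}-a^{(2)}))/3}\zeta_3^{2(\pm a^{(1)}-1)/3}=\zeta_3^{\pm a^{(2)}/3}$ in both sign cases). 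The explicit values of $[\zeta_3/a]$ and $[(1-\zeta_3)/a]$ for primary $a=a^{(1)}+a^{(2)}\zeta_3$ are the standard supplementary cubic laws from \cite[\S 7]{lemm}; the $\pm$ in the statement corresponds to the two sub-cases $a^{(1)}\equiv\pm 1 \pmod 3$, which are interchangeable via $a\mapsto -a$ (using $[-1/a]=1$).

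Finally, substituting $a^{(j)}\mapsto a^{(j)}+9c^{(j)}$ into the exponents $(1\pm(-a^{(1)}-a^{(2)}))/3$ and $(\pm a^{(1)}-1)/3$ shifts each by an integer multiple of $3$, so the corresponding cube root of unity is unchanged, proving the stated mod-$9$ periodicity. The main obstacle is the underlying cubic reciprocity for primary primes, which is a deep but standard theorem available off the shelf; after citing it, the rest of the argument is a multiplicative reduction plus careful bookkeeping to align the sign conventions in \cite{lemm} with the basis $\{1,\zeta_3\}$ used here.
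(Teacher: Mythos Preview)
The paper does not prove this lemma at all; the sentence immediately preceding it reads ``By \cite[Theorem 7.8]{lemm} we have the following cubic reciprocity law,'' and no argument is given. Your proposal is therefore already more detailed than the paper: you reduce the general primary case to primary primes via multiplicativity (using that $-1$ is a cube so the sign disappears), cite the classical prime-to-prime law, recover $\epsilon(\zeta_3,a)$, $\epsilon(1-\zeta_3,a)$, $\epsilon(3,a)$ from the conventions $[a/\mu]=[a/(1-\zeta_3)]=1$ together with the standard supplementary formulas, and verify the mod-$9$ periodicity by direct substitution into the exponents. This is the natural and correct route, and your check that the formula for $\epsilon(3,a)$ follows from the other two is a nice consistency observation.

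One caveat worth recording: the paper's convention sets $[a/q]_3=1$ for every inert rational prime $q\equiv 2\pmod 3$, whereas the classical theorem in \cite{lemm} is stated for the genuine residue symbol $a^{(q^2-1)/3}\bmod q$. Your reduction to $[\pi/\theta]=[\theta/\pi]$ for primary prime pairs is valid with the latter (standard) symbol; with the paper's literal convention the identity already fails for $\theta=2$ and $\pi$ a primary prime above $7$ (where $[2/\pi]=\zeta_3$ but $[\pi/2]=1$ by fiat). This is an inconsistency in the paper's Section~\ref{cubicsection}, not in your argument, and the intended meaning is clearly the standard symbol---but it is worth being aware that your citation of \cite{lemm} silently repairs this.
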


We say that $z \in \Z[\zeta_{12}]$ is primary if $z \equiv \pm 1 \,\, (3)$. For a primary $z=r+is \in \Z[\zeta_{12}]$ we define
\begin{align*}
[z]:= [z]_3=\bigg[ \frac{s}{r}\bigg].
\end{align*}
We will extend this definition to ideals of $\Z[\zeta_{12}]$ in Section \ref{12section}. We say that $w=u+iv \in \Z[\zeta_{12}]$ is primitive if $(u,v)=1.$  For $w \in Z[\zeta_{12}]$ primary primitive, set $\omega \equiv -v u^{-1} \,\, (\text{mod} \, u^2+v^2)$, where $u^{-1}$ is the multiplicative inverse modulo $u^2+v^2$. Analogously to the Dirichlet symbol defined in \cite[Section 19]{FI}, we define
\begin{align} \label{symboldef}
\bigg(\frac{z}{w} \bigg) := \bigg[\frac{r+\omega s}{u^2+v^2} \bigg].
\end{align}
Since $\omega^2 \equiv -1 \,\, (u^2+v^2)$, this is completely multiplicative in the upper variable, so that this is an extension of the character $[r/(u^2+v^2)]$ from $\Z[\zeta_3]$ to $\Z[\zeta_{12}]$.

Similarly as  \cite[Lemma 20.1]{FI} follows from the quadratic reciprocity, the cubic reciprocity law implies that the cubic spin $[z]$ is multiplicative up to the symbol $(z/w)$. The analogous result on $\Z[\sqrt{2}]$ in the work of Milovic is \cite[Proposition 8]{milovic}.

\begin{lemma} \label{twistmultiplemma}
Let  $w=u+iv,z=r+is \in \Z[\zeta_{12}]$ be primary with $w$ primitive. Then
\begin{align*}
[wz]=  [w][z] \bigg(\frac{z}{w} \bigg).
\end{align*}
\end{lemma}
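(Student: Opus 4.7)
The plan is to imitate Friedlander--Iwaniec's proof of \cite[Lemma~20.1]{FI} (see also \cite[Prop.~8]{milovic}), substituting cubic reciprocity (Lemma~\ref{cubicreciprocitylemma}) for the quadratic law. Write $wz = R + iS$ with $R = ur - vs$ and $S = us + vr$, both in $\Z[\zeta_3]$; the identity to prove becomes $[S/R] = [v/u][s/r](z/w)$. Two structural observations organise the proof. First, primarity of $w$ and $z$ in $\Z[\zeta_{12}]$ means $u, r \equiv \pm 1 \pmod 3$ and $v, s \equiv 0 \pmod 3$ in $\Z[\zeta_3]$, whence each of $u$, $r$, $R$, $u^2+v^2$, $r^2+s^2$ is primary in $\Z[\zeta_3]$; so every cubic reciprocity flip below has trivial sign factor $\epsilon = 1$. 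Second, since $9 \mid vs$, we have the sharper congruence $R \equiv ur \pmod 9$, which will be decisive for handling the non-primary parts of $s$.

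The algebraic input is $ur \equiv vs \pmod R$, which yields $uS \equiv (u^2+v^2)s \pmod R$. Applying the cubic character $[\cdot/R]$ and using its complete multiplicativity,
\begin{align*}
[wz] = [S/R] = [(u^2+v^2)/R] \cdot [s/R] \cdot [u/R]^{-1}.
\end{align*}
Cubic reciprocity flips $[u/R] = [R/u]$ and $[(u^2+v^2)/R] = [R/(u^2+v^2)]$. Using $R \equiv -vs \pmod u$ and $R \equiv u(r+\omega s) \pmod{u^2+v^2}$ (the latter from $\omega u \equiv -v$), together with $[-1/u] = 1$ and the auxiliary computation $[u/(u^2+v^2)] = [(u^2+v^2)/u] = [v^2/u] = [v/u]^2 = [w]^2$, one collects cubes via $[a/R]^3 = 1$ to arrive at
\begin{align*}
[wz] = [w] \cdot (z/w) \cdot [s/R]/[s/u].
\end{align*}

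The remaining task is to show $[s/R] = [s/u][s/r]$, which then produces the residual factor $[s/r] = [z]$ and closes the argument. Writing $s = \mu (1-\zeta_3)^a s_0$ with $s_0$ primary and $(s_0, 1-\zeta_3) = 1$: for the primary part, the congruence $R \equiv ur \pmod{s_0}$ (since $s_0 \mid s$) combined with cubic reciprocity gives $[s_0/R] = [R/s_0] = [ur/s_0] = [s_0/u][s_0/r]$. For the unit and $(1-\zeta_3)$ part, Lemma~\ref{cubicreciprocitylemma} shows that $[\mu/\cdot]$ and $[(1-\zeta_3)/\cdot]$ depend only on the residue mod~$9$, and since $R \equiv ur \pmod 9$ the same equality holds for these factors. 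Multiplying yields $[s/R] = [s/u][s/r]$.

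The main obstacle is this last step. Because $v, s$ are divisible by $3 = -\zeta_3^2 (1-\zeta_3)^2$, the naive cubic reciprocity does not apply directly to their symbols, and one must combine primary reciprocity (for $s_0$) with the mod-$9$ periodicity of the unit and ramified symbols. The boost from $R \equiv ur \pmod{vs}$ to $R \equiv ur \pmod 9$ (using $9 \mid vs$) is precisely what makes the primary and non-primary pieces fit together, and it plays the role analogous to the "$r$ odd" reduction in the quadratic argument of \cite{FI}. Degenerate cases in which some of the gcds that were implicitly assumed coprime fail to be so can be disposed of by direct inspection, as both sides of the claimed identity then vanish together.
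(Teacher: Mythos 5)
Your proof has a genuine gap at the very first manipulation. You pass from $uS \equiv (u^2+v^2)s \pmod R$ (with $R = ur-vs$, $S = us+vr$) to
\[
[S/R] = [(u^2+v^2)/R]\,[s/R]\,[u/R]^{-1},
\]
and this requires $(u,R)=1$. But $(u,R) = (u, ur - vs) = (u, vs) = (u,s)$ since $(u,v)=1$, and there is no reason why $(u,s)=1$: the hypotheses only give $(u,v)=1$ and (for the non-trivial case) $(r,s)=1$. If a prime $\pi$ divides both $u$ and $s$ then $[u/R]=0$ and the formula collapses. Crucially, this is \emph{not} one of your "degenerate cases in which both sides vanish together": with $\pi \mid u$, $\pi \mid s$, $\pi \nmid v$, $\pi \nmid r$ one checks $\pi \mid R$ but $\pi \nmid S$, so $(R,S)$ can still be $1$ and $[wz]\neq 0$; likewise $[w]$, $[z]$, $(z/w)$ remain nonzero. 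The same issue resurfaces in your final step $[s_0/R]=[R/s_0]$: the reciprocity flip needs $(s_0,R)=1$, but $(s_0,R)=(s_0,ur)=(s_0,u)$, which fails for the same reason. So the claim "degenerate cases ... vanish together" is simply false for the gcd $(u,s)$, and the proof as written does not cover a non-trivial set of inputs.

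By contrast, the paper's proof is organised precisely so as to avoid this. It factors out $r_0 := (r,v)$, writes $r=r_0r_1$, $v=r_0v_1$, $R = r_0(ur_1-v_1s)$, and (instead of cancelling $u$) uses the congruence $s\equiv ur_1v_1^{-1}\pmod{ur_1-v_1 s}$ so that the only lower-variable moduli that occur are $r_0$, $r_1$, $v_1$, $ur_1-v_1s$, and $u^2+v^2$; each of the resulting symbols is coprime by construction (e.g.\ $(r_1,v_1)=1$, $(u,r_0)=1$ from $r_0\mid v$, $(s,r_0)=1$ from $r_0\mid r$, and so on), and the one place where coprimality can fail, $(u^2+v^2, ur_1-v_1s)$, is exactly the case where both $[wz]$ and $(z/w)$ vanish. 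In effect, the paper works with the companion identity $vS \equiv (u^2+v^2)r \pmod R$ and absorbs the obstruction $(v,R) = (r,v) = r_0$ into the factorisation, whereas your version uses $uS \equiv (u^2+v^2)s \pmod R$ and leaves the obstruction $(u,R)=(u,s)$ unaccounted for. Your idea can likely be repaired by pulling out $d_1=(u,s)$ from $R$ at the outset and treating the $[\,\cdot\,/d_1]$ piece separately (note $S\equiv vr \pmod{d_1}$), but as written the argument is incomplete and the claimed "direct inspection" does not resolve it. The remaining components of your proof --- the reciprocity flips for the primary quantities $u$, $u^2+v^2$, $R$, the computation $[u/(u^2+v^2)]=[w]^2$, and the decomposition $s=\mu(1-\zeta_3)^a s_0$ combined with the mod-$9$ periodicity of the supplementary laws --- are all sound and align well with the paper's use of Lemma~\ref{cubicreciprocitylemma}.
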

\begin{proof}
First note that since $w$ and $z$ are primary, it follows that all of $u,$ $r,$ $wz,$ and $ur-vs$ are primary, and $3|v$ and $3|s$. If $(u,v) \neq 1$ or $(r,s) \neq 1,$ then the claim is trivial since then both sides vanish. Assume then that $(u,v)=(r,s)=1$. Let $r_0=(r,v)$ be primary, and denote $r=r_0 r_1$, $v = r_0 v_1$, so that $(r_1,v_1)=1$ (since $r$ is primary we have $(3,r_0)=1$ and we may pick a primary representative for $r_0$). By using  $s \equiv u r_1 v_1^{-1} \, (ur_1-v_1 s)$ we get
\begin{align*}
[wz] & =\bigg[\frac{us+vr}{ur-vs}\bigg]=\bigg[\frac{us}{r_0}\bigg]\bigg[\frac{us+vr}{ur_1-v_1 s}\bigg] =\bigg[\frac{u}{r_0}\bigg]\bigg[\frac{s}{r_0}\bigg]\bigg[\frac{u^2r_1v_1^{-1}+vr}{ur_1-v_1 s}\bigg] \\
&=\bigg[\frac{u}{r_0}\bigg]\bigg[\frac{s}{r_0}\bigg]\bigg[\frac{r_1v_1^{-1}}{ur_1-v_1 s}\bigg]\bigg[\frac{u^2+v^2}{ur_1-v_1 s}\bigg] \\
&=\bigg[\frac{u}{r_0}\bigg]^2\bigg[\frac{s}{r_0}\bigg]\bigg[\frac{r_1v_1^{-1}}{ur_1-v_1 s}\bigg]\bigg[\frac{u}{r_0}\bigg]^2\bigg[\frac{u^2+v^2}{ur_1-v_1 s}\bigg] \\
&=\bigg[\frac{u}{r_0}\bigg]^2\bigg[\frac{s}{r_0}\bigg]\bigg[\frac{r_1v_1^{-1}}{ur_1-v_1 s}\bigg]\bigg[\frac{u^2+v^2}{r_0}\bigg]\bigg[\frac{u^2+v^2}{ur_1-v_1 s}\bigg] \\
&=\bigg[\frac{s}{r_0}\bigg]\bigg[\frac{r_1}{ur_1-v_1 s}\bigg] \cdot \bigg[\frac{u}{r_0}\bigg]^2\bigg[\frac{v_1}{ur_1-v_1 s}\bigg]^2 \cdot \bigg[\frac{u^2+v^2}{ur-v s}\bigg].
\end{align*}
We now compute the above three factors separately.

We have
\begin{align*}
&\bigg[\frac{s}{r_0}\bigg]\bigg[\frac{r_1}{ur_1-v_1 s}\bigg] = \bigg[\frac{s}{r_0}\bigg]\bigg[\frac{ur_1-v_1 s}{ r_1}\bigg]  = \bigg[\frac{s}{r_0} \bigg]\bigg[\frac{v_1s}{r_1}\bigg]  =\bigg[\frac{v_1}{r_1}\bigg]  [z]
\end{align*}
by Lemma \ref{cubicreciprocitylemma} since $ur_1-v_1s$ and $r_1$ are both primary.

By a similar argument the second factor is
\begin{align*}
\bigg[\frac{u}{r_0}\bigg]^2\bigg[\frac{v_1}{ur_1-v_1 s}\bigg]^2 &= \bigg[\frac{u}{r_0}\bigg]^2\bigg[\frac{ur_1-v_1 s }{v_1}\bigg]^2\epsilon(v_1,ur_1-v_1 s)^2= \bigg[\frac{u}{r_0}\bigg]^2 \bigg[\frac{ur_1 }{v_1}\bigg]^2 \epsilon(v_1,ur_1-v_1 s)^2\\
&= \bigg[\frac{u}{v}\bigg]^2 \bigg[ \frac{r_1}{v_1}\bigg]^2 \epsilon(v_1,ur_1-v_1 s)^2= [w]^2\bigg[ \frac{r_1}{v_1}\bigg]^2 \epsilon(v_1,ur_1-v_1s)^2\epsilon(u,v)^2 .
\end{align*}

For the third factor, since $u$, $u^2+v^2$, and $ur-vs$ are primary, we have by two applications of Lemma \ref{cubicreciprocitylemma}
\begin{align*}
\bigg[\frac{u^2+v^2}{ur-vs} \bigg] = \bigg[\frac{ur-vs}{u^2+v^2} \bigg] = \bigg[ \frac{u}{u^2+v^2} \bigg] \bigg(\frac{z}{w} \bigg) = \bigg[ \frac{u^2+v^2}{u} \bigg]  \bigg(\frac{z}{w} \bigg)=[w]^2 \bigg(\frac{z}{w} \bigg).
\end{align*}

Combining all we have $[wz]=\mathcal{E} [w][z](\frac{z}{w}) $ for
\begin{align*}
\mathcal{E}&= \bigg[\frac{v_1}{r_1}\bigg]  \cdot \bigg[ \frac{r_1}{v_1}\bigg]^2 \epsilon(v_1,ur_1-v_1s)^2 \epsilon(u,v)^2  \\
&=\epsilon(v_1,r_1)   \epsilon(v_1,ur_1-v_1s)^2 \epsilon(v,u)  
\end{align*}
Let $v_1=\pm \zeta_3^k(1-\zeta_3)^\ell \lambda$ where $\lambda$ is primary. Then, since $9|v_1s$, we see from the supplementary laws in Lemma \ref{cubicreciprocitylemma} (note also that $\epsilon(\lambda,ur_1-v_1s)=1= \epsilon(\lambda,ur_1)$ since $ur_1-v_1s$ and $ur_1$ are primary)
\begin{align*}
 \epsilon(v_1,ur_1-v_1s) &= \epsilon(\zeta_3,ur_1-v_1s)^k \epsilon(1-\zeta_3,ur_1-v_1s)^\ell \epsilon(\lambda,ur_1-v_1s) \\
 &=  \epsilon(\zeta_3,ur_1)^k \epsilon(1-\zeta_3,ur_1)^\ell \epsilon(\lambda,ur_1) = \epsilon(v_1,ur_1) \\
 & = \epsilon(v_1,u) \epsilon(v_1,r) =\epsilon(r_0,u)^2\epsilon(v,u) \epsilon(v_1,r) = \epsilon(v,u) \epsilon(v_1,r)
\end{align*}
since $r_0$ and $u$ are primary. Hence, we get $\mathcal{E}=\epsilon(v_1,r_1) \epsilon(v,u)^2 \epsilon(v_1,r_1)^2 \epsilon(v,u)=1.$
\end{proof}

\begin{remark}
It is perhaps surprising that our Lemma 5 is much simpler than the corresponding results in the quadratic case \cite[Lemma 20.1]{FI} and \cite[Proposition 8]{milovic}, where the equalities hold only up to a simple factor. The main reason for this seems to be that $-1$ is always a cube so that $[-1/a] = 1$ for all $a \in \Z[\zeta_3]$, a fact that we used in the proof.
\end{remark}

We will abbreviate the norm maps as follows. For any $a=a^{(1)}+a^{(2)} \zeta_3 \in \Z[\zeta_3]$ and $\zeta=r+is \in \Z[\zeta_{12}]$, $r,s \in \Z[\zeta_3]$ we set
\begin{align*}
&N_{3}(a):= N_{\Q(\zeta_3)}(a) = (a^{(1)})^2 - a^{(1)} a^{(2)} +  (a^{(2)})^2 = |a|^2, \\
 N_{12/3} (\zeta)&:= N_{\Q(\zeta_{12})/\Q(\zeta_{3})}(\zeta)= r^2+s^2, \quad \text{and} \quad N_{12} := N_{\Q(\zeta_{12})} = N_{3} \circ N_{12/3}.
\end{align*}

The symbol $(z/w)$ in Lemma \ref{twistmultiplemma} is completely multiplicative in the upper variable. Similar to \cite[Sections 19 and 21]{FI}, to handle Type II sums we need a multiplier rule in the lower variable also, which is given by the following.
\begin{lemma}\label{symbolmultiplemma} Let $w_1,w_2 \in \Z[\zeta_{12}]$ be primary primitive and let $\zeta=r+is \in \Z[\zeta_{12}]$ be primary. Set 
\begin{align*}
q_j= N_{12/3}(w_j), \quad e:=(w_1,\sigma(w_2^2)),\quad \text{and} \quad d:= N_{12/3}(e),
\end{align*}
where $\sigma$ is the conjugation $\sigma(a+ib)=a-ib$. Then for some root $\omega^2+1 \equiv 0 \,\,(q_1q_2^2)$ we have
\begin{align*}
\bigg(\frac{\zeta}{w_1} \bigg) \bigg(\frac{\zeta}{w_2} \bigg)^2 &= \bigg[\frac{r-\omega s}{d} \bigg] \bigg[\frac{r+\omega s}{q_1q_2^2/d} \bigg].
\end{align*}
\end{lemma}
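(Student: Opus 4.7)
The plan is to unfold both sides using the definition of the symbol and reduce to a prime-by-prime verification inside the factorization of $q_1 q_2^2$ in $\Z[\zeta_3]$. First, by complete multiplicativity of $[\cdot/\cdot]$ in the lower variable and the definition (\ref{symboldef}), I rewrite
\begin{align*}
\bigg(\frac{\zeta}{w_1}\bigg) \bigg(\frac{\zeta}{w_2}\bigg)^2 = \bigg[\frac{r+\omega_1 s}{q_1}\bigg] \bigg[\frac{r+\omega_2 s}{q_2^2}\bigg],
\end{align*}
where $\omega_j \equiv -v_j u_j^{-1} \pmod{q_j}$ satisfies $\omega_j^2 + 1 \equiv 0 \pmod{q_j}$.

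Next, I analyze the orientation of $w_1, w_2$ at each prime $\pi$ of $\Z[\zeta_3]$ dividing $q_1 q_2$. Since $w_j$ is primitive, every such $\pi$ splits in $\Z[\zeta_{12}]$ as $\pi = \pi_1 \sigma(\pi_1)$, and exactly one of $\pi_1, \sigma(\pi_1)$ divides $w_j$. The ring map $\Z[\zeta_{12}] \to \Z[\zeta_3]/(q_j)$ defined by $i \mapsto \omega_j$ annihilates $\sigma(w_j)$, so $\omega_j \pmod \pi$ coincides with the image of $i$ under the isomorphism $\Z[\zeta_{12}]/(\pi_2) \cong \Z[\zeta_3]/\pi$ whenever $\pi_1 \mid w_j$. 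This yields the dichotomy, at primes $\pi$ dividing both $q_1, q_2$, of $\omega_1 \equiv \omega_2 \pmod \pi$ (\emph{same orientation}, when the same $\pi_j$ divides both $w_1, w_2$) and $\omega_1 \equiv -\omega_2 \pmod \pi$ (\emph{opposite orientation}, when the swapped $\pi_j$'s do). Unpacking $e = (w_1, \sigma(w_2)^2)$ shows $\pi_1 \mid e$ iff $\pi_1 \mid w_1$ and $\sigma(\pi_1) \mid w_2$, so $v_\pi(d) = \min(v_\pi(q_1), 2 v_\pi(q_2))$ at opposite primes and $v_\pi(d) = 0$ elsewhere.

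With these ingredients, I build $\omega \pmod{q_1 q_2^2}$ prime-by-prime. On non-opposite primes, set $\omega \equiv \omega_1 (\equiv \omega_2) \pmod \pi$. On an opposite prime $\pi$, set $\omega \equiv \omega_1 \pmod \pi$ if $v_\pi(q_1) > 2 v_\pi(q_2)$, and $\omega \equiv \omega_2 \pmod \pi$ otherwise. Assemble via CRT and lift by Hensel's lemma (with standard care at primes above $2$) to a root of $X^2 + 1$ modulo $q_1 q_2^2$. A short case analysis at each opposite prime then verifies the local identity
\begin{align*}
\bigg[\frac{r+\omega_1 s}{\pi^{v_\pi(q_1)}}\bigg] \bigg[\frac{r+\omega_2 s}{\pi^{2 v_\pi(q_2)}}\bigg] = \bigg[\frac{r-\omega s}{\pi^{v_\pi(d)}}\bigg] \bigg[\frac{r+\omega s}{\pi^{v_\pi(q_1 q_2^2/d)}}\bigg],
\end{align*}
because the case-dependent choice of $\omega$ is rigged so that the smaller of $v_\pi(q_1), 2 v_\pi(q_2)$ (which is exactly what $d$ carries) receives the sign flip $\omega_j \leftrightarrow -\omega$. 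Non-opposite primes contribute identical factors on both sides, so multiplying over all $\pi$ completes the proof.

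The main obstacle is the sign bookkeeping: identifying which of the two square roots of $-1$ in $\Z[\zeta_3]/\pi$ each $\omega_j$ picks up, and then choosing a single global $\omega$ that agrees with $\omega_1$ on some opposite primes and with $\omega_2$ on others according to which power is larger. This asymmetry is precisely the mechanism producing the $r - \omega s$ numerator on the $d$-side versus $r + \omega s$ on the complementary side.
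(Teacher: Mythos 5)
Your proof is correct, and it takes a genuinely different route from the paper's. The paper's argument leans on the reciprocity $(z/w)=(w/z)$ for primary primitive $z,w$, which it extracts from Lemma~\ref{twistmultiplemma}: after writing $\zeta = az$ with $a\in\Z[\zeta_3]$ and $z$ primary primitive and disposing of the $a$-part directly, it flips $(z/w_1)(z/w_2)^2=(w_1/z)(w_2^2/z)$, isolates the $\Z[\zeta_3]$-factor $d$ of $w_1w_2^2$, flips the cofactor $w_1w_2^2/d$ back, and uses $r^2+s^2\equiv(r+\omega s)(r-\omega s)\pmod d$ to manufacture the $r-\omega s$ numerator. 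Your argument bypasses reciprocity entirely and works directly with the definition: the orientation dichotomy $\omega_1\equiv\omega_2$ versus $\omega_1\equiv-\omega_2\pmod\pi$ according to whether the same or the conjugate prime of $\Z[\zeta_{12}]$ above $\pi$ divides $w_1$ and $w_2$, with $d$ recording precisely the opposite-orientation primes, is a clean structural fact that the paper's reciprocity manipulation keeps implicit; and since $[\cdot/\pi^k]=[\cdot/\pi]^k$ depends only on the residue modulo $\pi$, your prime-by-prime check is genuinely local. Your proof is more elementary and in fact never uses that $\zeta$ or the $w_j$ are primary (only primitivity of the $w_j$), so it is slightly more general, while the paper's proof is shorter and reuses machinery already in place. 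One shared imprecision worth noting: if the inert rational prime $2$ divides $q_2$, then $4\mid q_1q_2^2$, but $X^2+1$ has no root modulo $4$ in $\Z[\zeta_3]$, so the advertised $\omega$ with $\omega^2+1\equiv0\pmod{q_1q_2^2}$ does not literally exist; your parenthetical about ``standard care at primes above $2$'' glosses over this, but since $[\cdot/2^k]$ takes only the values $0$ and $1$, taking $\omega$ to be a root modulo the odd part of $q_1q_2^2$ repairs the statement for both arguments.
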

\begin{proof}
We can write $\zeta= a z$, where $a \in \Z[\zeta_{3}]$ and $z$ is primary primitive. Since both sides are completely multiplicative, it suffices to prove the claim separately for  $a \in \Z[\zeta_3]$ and for $z\in \Z[\zeta_{12}]$ primary primitive.

For any $a \in \Z[\zeta_3]$ we have by definition
\begin{align*}
\bigg(\frac{a}{w_1} \bigg) \bigg(\frac{a}{w_2} \bigg)^2 = \bigg[\frac{a}{q_1}\bigg] \bigg[\frac{a}{q_2^2}\bigg] = \bigg[\frac{a}{d}\bigg]\bigg[\frac{a}{q_1q_2^2/d}\bigg].
\end{align*}

For $z=r+is$ primary primitive we get from Lemma \ref{twistmultiplemma} a reciprocity law $(z/w)=(w/z)$ for any primary primitive $w$. Note that by definition $w_1w_2^2/d$ is primary primitive. Therefore, we get by reciprocity (note that $r^2+s^2$ and $d$ are primary)
\begin{align*}
\bigg(\frac{z}{w_1} \bigg) \bigg(\frac{z}{w_2} \bigg)^2 &=\bigg(\frac{w_1}{z} \bigg) \bigg(\frac{w_2^2}{z} \bigg)  =\bigg(\frac{d}{z} \bigg)\bigg(\frac{w_1 w_2^2/d}{z} \bigg) = \bigg[\frac{d}{N_{12/3} (z)} \bigg]\bigg(\frac{w_1 w_2^2/d}{z} \bigg)\\
& = \bigg[\frac{r^2+s^2}{d} \bigg] \bigg[\frac{r+\omega s}{q_1q_2^2 /d^2} \bigg] = \bigg[\frac{r-\omega s}{d} \bigg] \bigg[\frac{r+\omega s}{q_1q_2^2/d} \bigg],
\end{align*}
since $r^2+s^2 \equiv (r+\omega s) (r-\omega s) \,\, (d)$.
\end{proof}

\begin{remark} In \cite[Section 19]{FI} we have for primary $z=r+is,w=u+iv \in \Z[i]$ with $w$ primitive
\begin{align*}
\bigg( \frac{z}{w}\bigg)_2:= \bigg(\frac{r+\omega s}{u^2+v^2}\bigg)_2 = \bigg(\frac{ur-vs}{u^2+v^2}\bigg)_2 =\bigg( \frac{\text{Re}\,wz}{|w|}\bigg)_2.
\end{align*}
In our case the middle equality does not hold but we have for $z=r+is,w=u+iv \in \Z[\zeta_3]$ primary
\begin{align*}
\bigg(\frac{ur-vs}{u^2+v^2}\bigg) =\bigg(\frac{u}{u^2+v^2}\bigg)\bigg(\frac{r+\omega s}{u^2+v^2}\bigg) =[w]^2\bigg( \frac{z}{w}\bigg) 
\end{align*}
by reciprocity if $w$ is primary primitive. Lack of this alternative representation does not hinder us in any way.
\end{remark}
\section{The twelfth cyclotomic extension} \label{12section}
So far we have defined $[z]$ only for primary $z \in \Z[\zeta_{12}]$. In this section we extend the definition to ideals of $\Z[\zeta_{12}]$.

The unit group of $\Z[\zeta_{12}]$ is generated by $\zeta_{12}$ and the fundamental unit (cf. \cite[Chapter 7.4]{lemm})
\begin{align*}
\epsilon_0 := \frac{1+\sqrt{3}}{1-i} = 1+ \zeta_3 - i\zeta_3.
\end{align*}
For every $z \in \Z[\zeta_{12}]$ coprime to 3 there exists a unit $\mu$ such that $\mu z \equiv 1 \, \, (3)$ (cf. \cite[Exercise 7.4]{lemm}). The next lemma shows that the subgroup of primary units is $\{\pm (i \epsilon_0^6)^k: \, k \in \Z\}$.

\begin{lemma} \label{unitmod3lemma} We have $-i \epsilon_0^6 \equiv 1 \,\, (3)$. Furthermore, $k=6$ is the smallest positive exponent such that $\epsilon_0^k \equiv \zeta_{12}^{\ell} \,\, (3)$ for some integer $\ell$, so that as a set
\begin{align*}
(\Z[\zeta_{12}]/3\Z[\zeta_{12}])^{\times} = \{ \zeta_{12}^{\ell} \epsilon_0^{k}, \,\, k \in \{0,1, \dots, 5\}\}.
\end{align*}
 Also, we have
\begin{align*}
[\pm i\epsilon_0^6] =1.
\end{align*}
\end{lemma}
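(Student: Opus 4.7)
The plan is to establish the three assertions by direct computation of $\epsilon_0^6$ and of the lower powers $\epsilon_0^k \pmod 3$, using the closed form $\epsilon_0 = (1+\sqrt 3)/(1-i)$ together with the conversion $\sqrt 3 = -i(1+2\zeta_3)$. Squaring gives $\epsilon_0^2 = (1+\sqrt 3)^2/(1-i)^2 = (4+2\sqrt 3)/(-2i) = i(2+\sqrt 3)$, whence $\epsilon_0^6 = -i(2+\sqrt 3)^3 = -i(26+15\sqrt 3)$. Substituting $-i\sqrt 3 = -(1+2\zeta_3)$ yields
\[
\epsilon_0^6 = -15 - 30\zeta_3 - 26 i,
\]
so $\epsilon_0^6 \equiv i \pmod 3$ and $-i \epsilon_0^6 \equiv 1 \pmod 3$, which is the first claim.

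For the minimality of $k=6$, I would compute $\epsilon_0^k \pmod 3$ for $k=1,\ldots,5$ by iterating $\epsilon_0 = (1+\zeta_3) - \zeta_3 i$ and $\epsilon_0^2 \equiv 1 - \zeta_3 - i \pmod 3$. Writing each result as $a_k + i b_k$ with $a_k, b_k \in \Z[\zeta_3]/3$, the key observation is that in every one of these five cases both $a_k \not\equiv 0$ and $b_k \not\equiv 0 \pmod 3$. On the other hand, using $\zeta_{12} = -i\zeta_3$ one enumerates the twelve residues $\zeta_{12}^\ell \pmod 3$ as $\{\pm 1, \pm \zeta_3, \pm(1+\zeta_3)\} \cup i\{\pm 1, \pm \zeta_3, \pm(1+\zeta_3)\}$, each of which has exactly one of its $a$- or $b$-coordinate equal to zero. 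Hence $\epsilon_0^k \not\equiv \zeta_{12}^\ell \pmod 3$ for any $\ell$ when $1 \le k \le 5$, while $\epsilon_0^6 \equiv i = \zeta_{12}^3 \pmod 3$. For the set description, I would note that $\Phi_{12}(x) \equiv (x^2+1)^2 \pmod 3$ gives $(3) = \p^2$ in $\Z[\zeta_{12}]$ with $N\p = 9$, so $|(\Z[\zeta_{12}]/3)^\times| = 81 - 9 = 72$; and the $72$ products $\zeta_{12}^\ell \epsilon_0^k$ with $0 \le \ell \le 11$, $0 \le k \le 5$ are pairwise incongruent mod $3$ by the minimality just established (a collision would force $\epsilon_0^{k-k'} \equiv \zeta_{12}^{\ell'} \pmod 3$ for some $1 \le k-k' \le 5$), so they exhaust the unit group.

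For $[\pm i \epsilon_0^6] = 1$, I would use the explicit form $-i\epsilon_0^6 = -26 + i(15 + 30\zeta_3)$ combined with the identity $1+2\zeta_3 = \zeta_3(1-\zeta_3)$ and $3 = -\zeta_3^2(1-\zeta_3)^2$ to factor
\[
15 + 30\zeta_3 = 15\zeta_3(1-\zeta_3) = -5 (1-\zeta_3)^3.
\]
Since $1-\zeta_3$ is coprime to $-26$, the cubic symbol $[(1-\zeta_3)/(-26)]$ is a cube root of unity and its cube equals $1$, giving $[-i\epsilon_0^6] = [-5/(-26)]$. As $-5$ and $-26$ are both primary, Lemma \ref{cubicreciprocitylemma} yields $[-5/(-26)] = [-26/(-5)] = [-1/(-5)] = 1$, using $-26 \equiv -1 \pmod 5$ and that $-1=(-1)^3$ is a cube. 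The computation for $i\epsilon_0^6$ is identical with $5$ in place of $-5$, reducing to $[5/26] = [26/5] = [1/5] = 1$. The main point requiring attention is the bookkeeping of units and primary representatives in this cubic reciprocity step; the rest amounts to iterating $\epsilon_0^{k+1} = \epsilon_0 \cdot \epsilon_0^k$ modulo $3$.
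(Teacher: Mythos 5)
Your proposal is correct, and the overall strategy — compute $\epsilon_0^6$ explicitly, verify the minimality of $k=6$ by direct inspection of low powers, then evaluate the cubic spin via cubic reciprocity — is the same as the paper's. There are, however, a few organisational differences worth noting. For the minimality step, the paper tabulates the fifteen products $\epsilon_0^k\zeta_{12}^\ell$ ($1\le k\le 5$, $0\le\ell\le 2$) and checks each is $\not\equiv\pm 1,\pm i\ (3)$; you instead compute only the five values $\epsilon_0^k\ (3)$ and observe that, writing $\epsilon_0^k\equiv a_k+ib_k$ with $a_k,b_k\in\Z[\zeta_3]/3$, both coordinates are nonzero, whereas every $\zeta_{12}^\ell$ has exactly one coordinate zero — a cleaner criterion requiring a third of the arithmetic. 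For the set description, the paper invokes the fact (cited as an exercise in Lemmermeyer) that every element of $\Z[\zeta_{12}]$ coprime to $3$ is congruent to a unit modulo $3$; you instead compute $|(\Z[\zeta_{12}]/3)^\times|=72$ directly from $\Phi_{12}\equiv(x^2+1)^2\ (3)$ and argue by pairwise incongruence, which is self-contained. For the spin, the paper splits $15+30\zeta_3 = 15(1+2\zeta_3)$ and applies the supplementary laws for $\zeta_3$ and $1-\zeta_3$ separately; your factorisation $15+30\zeta_3 = -5(1-\zeta_3)^3$ kills the ramified factor in one stroke (its cube under the symbol is $1$) and reduces to a rational reciprocity step $[\pm 5/\mp 26]$, which is tidier. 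One small remark: your computation gives $i\epsilon_0^6 = 26 - i(15+30\zeta_3)$, whereas the paper writes $26 + i(15+30\zeta_3)$; the sign discrepancy is immaterial modulo $3$ and for the spin (since $[-1/a]=1$), but your sign appears to be the correct one, and the same applies to the exact form of $\epsilon_0^3$ in the paper's table caption.
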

\begin{proof}
By direct computation we see that
\begin{align*}
i\epsilon_0^6 = 26 + i(15+30\zeta_3) \equiv 2 \,\, (3),
\end{align*}
and that for $1\leq k \leq 5$ and for all $\ell$ we have $\epsilon_0^k \zeta_{12}^\ell \not\equiv \pm 1 \,\, (3)$ (see Table \ref{tablepower}). Note that to check this it suffices to verify that for all $1\leq k \leq 5$ and $0 \leq \ell \leq 2$ we have $\epsilon_0^k \zeta_{12}^\ell \not\equiv \pm 1, \pm i \,\, (3)$.

Since every number coprime to 3 is congruent to some unit modulo 3, this implies the claimed structure for $(\Z[\zeta_{12}]/3\Z[\zeta_{12}])^{\times}$. By the definition of $[z]$ we get
\begin{align*}
[i\epsilon_0^6] = \bigg[ \frac{15+30\zeta_3}{26} \bigg] =\bigg[ \frac{15}{26} \bigg]\bigg[ \frac{1+2\zeta_3}{26} \bigg] =\bigg[ \frac{\zeta_3}{26}\bigg]\bigg[ \frac{1-\zeta_3}{26}\bigg] = \zeta_3^{(1+26)/3} \zeta_3^{(-1-26)/3}=1
\end{align*}
by the supplementary laws in Lemma \ref{cubicreciprocitylemma}, and since $[m/n]=1$ for all $m,n \in \Z$ with $n \neq 0$.
\end{proof}

\begin{center}
\begin{table}[t] \label{tablepower}
\begin{tabular}{ c|c| c| c }
\backslashbox{$k$}{$\ell$} & 0 & 1 & 2  \\
\hline
1 & $\frac{1}{2} + \frac{i}{2} + \frac{\sqrt{3}}{2} + \frac{i \sqrt{3}}{2}$& $\frac{1}{2} + i +\frac{i\sqrt{3}}{2}$ & $\frac{-1}{2} + i +\frac{i \sqrt{3}}{2}$ \\
\hline
2 & $2i + i \sqrt{3}$ & $-1 + \frac{3i}{2} - \frac{\sqrt{3}}{2} - i \sqrt{3}$ & $\frac{-3}{2} + i - \sqrt{3} + \frac{i \sqrt{3}}{2} $\\
\hline
3 & $ \frac{-5}{2} +\frac{5i}{2} -\frac{3 \sqrt{3}}{2} + \frac{3 i\sqrt{3}}{2} $ & $\frac{-7}{2}+ i -2 \sqrt{3} + \frac{i \sqrt{3}}{2}$ & $\frac{-7}{2} -i - 2 \sqrt{3} - \frac{i \sqrt{3}}{2}$\\
\hline
4 & $-7-4 \sqrt{3}$ & $-6 -\frac{7i}{2} - \frac{7 \sqrt{3}}{2} -2 i \sqrt{3}$  & $\frac{-7}{2} - 6i -2 \sqrt{3} -\frac{7i\sqrt{3}}{2}$ \\
\hline
5 & $\frac{-19}{2} -\frac{19i}{2} -\frac{11\sqrt{3}}{2}-\frac{11i\sqrt{3}}{2}$ & $\frac{-7}{2} - 13i-2 \sqrt{3} - \frac{15i\sqrt{3}}{2}$ & $\frac{7}{2}-13i + 2 \sqrt{3} - \frac{15i\sqrt{3}}{2}$
\end{tabular}
\caption{Values of $\epsilon_0^k \zeta_{12}^\ell$. Note that for $z \in \Z[\zeta_{12}]$ if $z \equiv \pm 1, \pm i \, (3)$, then both of the coefficients of $\sqrt{3}/2$ and $i\sqrt{3}/2$ are divisible by 3. The only entry satisfying this is $\epsilon_0^3 = \frac{-5}{2} +\frac{5i}{2} -\frac{3 \sqrt{3}}{2} + \frac{3 i\sqrt{3}}{2} ,$ but we have $\epsilon_0^3 = 5-2i -3(1+i)\zeta_3 \equiv 2-2i \not \equiv \pm 1, \pm i \, (3)$, so that none of the values in the table are $\equiv \pm 1, \pm i \, (3)$.}
\end{table}
\end{center}

As a corollary we see that $[z]$ does not depend on which primary associate we choose.
\begin{lemma}
If $z$ and $z'$ are primary associates, then $[z]=[z']$.
\end{lemma}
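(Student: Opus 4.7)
The plan is to reduce the claim to the structure of the primary units of $\Z[\zeta_{12}]$ given by Lemma \ref{unitmod3lemma}, and then apply the twisted multiplicativity of Lemma \ref{twistmultiplemma} to each generator of that unit subgroup.

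First, I would observe that if $z$ and $z' = \mu z$ are both primary, then $\mu \in \Z[\zeta_{12}]^\times$ must itself be primary: reducing modulo $3$ gives $\mu \equiv (\pm 1)(\pm 1)^{-1} \equiv \pm 1 \,(3)$. By Lemma \ref{unitmod3lemma} the primary unit subgroup is generated by $-1$ and $\eta := i\epsilon_0^6$, so it suffices to prove $[-z] = [z]$ and $[\eta z] = [z]$; the general case then follows by induction on the exponent of $\eta$ (and passing to inverses, which are also primary units).

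For the sign change, $-z = -r - is$ is primary and
\[
[-z] = \bigg[\frac{-s}{-r}\bigg] = \bigg[\frac{-1}{-r}\bigg]\bigg[\frac{s}{-r}\bigg] = \bigg[\frac{s}{-r}\bigg] = \bigg[\frac{s}{-1}\bigg]\bigg[\frac{s}{r}\bigg] = \bigg[\frac{s}{r}\bigg] = [z],
\]
using complete multiplicativity of $[\cdot/\cdot]$ in both entries together with $[-1/a] = 1$ for every $a \in \Z[\zeta_3]$ (since $-1 = (-1)^3$ is a cube, so every cubic residue character vanishes on it).

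For multiplication by $\eta$, write $\eta = R + iS$ with $R = 26$ and $S = 15 + 30\zeta_3$. Lemma \ref{unitmod3lemma} says $\eta$ is primary and $[\eta] = 1$. I would then check that $\eta$ is primitive: since $\eta$ is a unit of $\Z[\zeta_{12}]$,
\[
N_{12/3}(\eta) = R^2 + S^2 = 676 + (225 + 900\zeta_3 + 900\zeta_3^2) = 676 - 675 = 1,
\]
so any common divisor of $R$ and $S$ in $\Z[\zeta_3]$ would divide $1$ and is thus a unit. Lemma \ref{twistmultiplemma} now applies to $w = \eta$, yielding
\[
[\eta z] = [\eta]\,[z]\,\bigg(\frac{z}{\eta}\bigg).
\]
By definition \eqref{symboldef} the symbol $(z/\eta) = [(r + \omega s)/(R^2 + S^2)] = [(r+\omega s)/1] = 1$. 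Combined with $[\eta] = 1$, this gives $[\eta z] = [z]$.

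There is no real obstacle here beyond bookkeeping, because Lemma \ref{unitmod3lemma} has already isolated the delicate arithmetic (the structure of primary units and the vanishing spin of $\eta$) and Lemma \ref{twistmultiplemma} does the rest. The only nuance worth flagging is verifying primitivity of $\eta$, which I would do via the norm computation above; once that is in hand, the twisted multiplicativity collapses because a cubic residue symbol with modulus $1$ is trivial.
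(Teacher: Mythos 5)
Your proof is correct and follows essentially the same route as the paper: identify $\mu$ as a primary unit, invoke Lemma \ref{unitmod3lemma} for the structure $\pm(i\epsilon_0^6)^k$, and apply Lemma \ref{twistmultiplemma} together with the triviality of $(z/\mu)$ (since $N_{12/3}$ of a unit is a unit). The paper treats $\mu$ in a single application of Lemma \ref{twistmultiplemma} and absorbs the power into $[i\epsilon_0^6]^k=1$, whereas you unpack it generator-by-generator with an explicit primitivity check for $\eta$ and an induction; these are cosmetic differences, and your explicit verification that $N_{12/3}(\eta)=1$ usefully makes the implicit primitivity hypothesis of Lemma \ref{twistmultiplemma} visible.
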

\begin{proof}
There is some unit $\mu$ such that $z=\mu z'$. Since $z$ and $z'$ are primary, also $\mu$ must be primary. By Lemma \ref{unitmod3lemma} we see that $\mu = \pm(i\epsilon_0^6)^k$ for some $k \in \Z$. Hence, by Lemma \ref{twistmultiplemma}
\begin{align*}
[z] = [\mu z'] = [\mu][z'] \bigg( \frac{z'}{\mu} \bigg).
\end{align*}
By definition
\begin{align*}
\bigg( \frac{z'}{\mu} \bigg) = \bigg[ \frac{r'+\omega s'}{N_{12/3}(\mu)}\bigg] = 1
\end{align*}
since $N_{12/3}(\mu)$ is a unit in $\Z[\zeta_3]$, and similarly we see that
\begin{align*}
[\mu] = [\pm(i\epsilon_0^6)^k] = [i\epsilon_0^6]^k =1
\end{align*}
by using the last part of Lemma \ref{unitmod3lemma}.
\end{proof}
Since $\Z[\zeta_{12}]$ is a principal ideal domain, by the above lemma the following definition is appropriate.
\begin{definition} \label{cubicdefinition} For any ideal $\a$ of $\Z[\zeta_{12}]$, we define
\begin{align*}
[\a] := [z] = \bigg[\frac{s}{r}\bigg]
\end{align*}
if $\a$ is generated by $z=r+is$ and $z$ is primary.
\end{definition}
\begin{remark}
We  also define $[z]$ for non-primary $z=r+is$ by $[z]:=[s/r]$. Then we have $[z] = \nu(z)[(z)]$ where $\nu(z)$ depends only on the residue class $z \, (3)$. We will not need it in the following but it might be interesting to give a simple closed formula for $\nu(z)$.
\end{remark}
\begin{remark}
It is natural that we need the reciprocity laws to prove that the spin is well-defined on ideals. After all, the cubic reciprocity can be restated as a transformation rule for $[z]$ under multiplication of $z$ by a root of unity. This is because for $z=r+is$
\begin{align*}
[iz] =[-s+ir] =\bigg[\frac{r}{s} \bigg]& = \epsilon(r,s) [z], \quad  \text{and}  \quad [\zeta_3 z] = \bigg[ \frac{\zeta_3 s}{\zeta_3 r}\bigg] = \bigg[ \frac{\zeta_3 }{r}\bigg]\bigg[ \frac{s}{ r}\bigg] = \epsilon(\zeta_3,r) [z],
\end{align*}
which by considering $z_1=r+i(1-\zeta_3)$ and $z_2=r+i3$ covers also the supplementary laws.
\end{remark}

\section{Sieve argument} \label{sievesection}
In this section we prove Theorem \ref{maintheorem}. We apply a sieve argument in $\Z[\zeta_{12}]$ to decompose our sum into Type I and Type II sums. The argument is essentially same as in Harman's sieve method \cite{harman}. We could use \cite[Proposition 5.2]{fimr} directly but we give our on proof based on Buchstab's identity since it does not take much effort to inclue it here.

For two functions $f$ and $g$ with $g \geq 0$, it is convenient for us to denote  $f(N) \pprec g(N)$ if $f(N) \ll_\epsilon N^{\epsilon}g(N)$. A typical bound we use is $\tau_k(n) \pprec 1$, where $\tau_k$ is the $k$-fold divisor function. For multivariable functions such as sums over two variables we write
\begin{align*}
\sum_{\substack{m \sim M \\ n \sim N}} f(m,n) \pprec \sum_{\substack{m \sim M \\ n \sim N}} g(m,n)
\end{align*}
to mean
\begin{align*}
\sum_{\substack{m \sim M \\ n \sim N}} f(m,n) \ll_\epsilon (M+N)^{\epsilon} \sum_{\substack{m \sim M \\ n \sim N}} g(m,n).
\end{align*}
 We say that an arithmetic function $f$ is divisor bounded if $|f(n)| \ll \tau_k(n)$ for some $k$.

For the sieve we require the following arithmetic information, which is proved in Sections \ref{typeisection} and \ref{typeiisection}.

\begin{prop} \emph{\textbf{(Type I sums).}}\label{typeiprop} Let $\alpha_\d$ be divisor bounded. Then
\begin{align*}
\sum_{N_{12}(\d) \leq D} \alpha_\d \sum_{N_{12}(\n)\sim x/N_{12}(\d)} [\d \n] \pprec x^{11/12} D^{13/12}
\end{align*}
\end{prop}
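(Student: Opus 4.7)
My plan is to use Lemma~\ref{twistmultiplemma} to convert the spin $[\d\n]$ into a cubic character sum on $\Z[\zeta_3]$, and then to bound that character sum via the Poisson summation formula of Section~\ref{poissonsection}. Choose primary generators $w=u+iv$ for $\d$ and $z=r+is$ for $\n$ as in Section~\ref{fixingsection}. Since $[\a]=0$ whenever the primary generator is non-primitive, we may restrict to $(u,v)=(r,s)=1$. Lemma~\ref{twistmultiplemma} then gives
\begin{align*}
[\d\n]=[wz]=[w][z]\bigg(\frac{z}{w}\bigg),
\end{align*}
and since $[w]$ depends only on $\d$, absorbing it into $\alpha_\d$ reduces the task to bounding
\begin{align*}
\sum_{N_{12}(\d)\le D}\alpha_\d'\sum_z \bigg[\frac{s}{r}\bigg]\bigg[\frac{r+\omega s}{q}\bigg],
\end{align*}
where $q=N_{12/3}(w)$ and $\omega$ is a fixed root of $\omega^2+1\equiv 0\,(q)$.

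Next, I would isolate the $r$ variable and apply the substitution $s\mapsto s+\omega r$ in the inner $s$-sum. Using $\omega^2\equiv -1\,(q)$ one obtains $r+\omega(s+\omega r)\equiv\omega s\,(q)$, while $[(s+\omega r)/r]=[s/r]$ since $r\mid \omega r$. Pulling out the unit factor $[\omega/q]$, the inner sum collapses to
\begin{align*}
\sum_{s\in I(r)}\bigg[\frac{s}{rq}\bigg],
\end{align*}
where $I(r)$ is a region in $\Z[\zeta_3]$ of diameter $\asymp(x/N_{12}(\d))^{1/4}$ and $[\,\cdot/rq]$ is a cubic character whose modulus has norm $\asymp N_3(r)\,N_{12}(\d)$. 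This is the precise two-dimensional analogue of the step in \cite[Section~22]{FI} that in their setting reduces to a one-dimensional P\'olya--Vinogradov sum.

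The final step is to estimate these short cubic character sums by Poisson summation on $\Z[\zeta_3]$. In the generic case (when $rq$ is not a cube, so $[\,\cdot/rq]$ is non-principal) one obtains a Gauss-sum bound of order $N_3(rq)^{1/2+\epsilon}$ coupled with a boundary contribution coming from the lattice point count for $I(r)$, and summing the resulting estimate over $r$ and then over $\d$ delivers $\pprec x^{11/12}D^{13/12}$. The principal obstacle is the quality of this two-dimensional character sum bound: since $I(r)$ is a genuinely two-dimensional region, the lattice point error on $\Z[\zeta_3]$ is intrinsically weaker than its one-dimensional counterpart in \cite[Section~22]{FI}. This single loss is precisely what restricts the useful Type~I range to $D\ll x^{1/13}$ and, as noted in the introduction, is responsible for the exponent $1/143$ in Theorem~\ref{maintheorem} in place of the $1/77$ of \cite[Theorem~2]{FI}.
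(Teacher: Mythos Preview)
Your proposal is correct and follows essentially the same approach as the paper: reduce via Lemma~\ref{twistmultiplemma} and the shift $s\mapsto s+\omega r$ to cubic character sums $\sum_{s\in I(r)}[s/rq]$, then bound these via the smoothed P\'olya--Vinogradov inequality on $\Z[\zeta_3]$ (Lemma~\ref{pvlemma}, which is the consequence of Lemma~\ref{poissonlemma} you invoke). The paper makes explicit the finer-than-dyadic partition of $I(r)$ into smooth boxes of side $K$ and the optimisation $K=N^{1/6}N_3(q)^{1/6}$ that balances the Gauss-sum term against the boundary error---this is precisely the ``boundary contribution'' step your sketch alludes to but does not spell out.
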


\begin{prop} \emph{\textbf{(Type II sums).}}\label{typeiiprop} Let $\alpha_\m$ and $\beta_\n$ be divisor bounded coefficients. Then
\begin{align*}
\sum_{N_{12}(\m) \sim M} \sum_{N_{12}(\n) \sim N} \alpha_\m \beta_\n [\m \n] \pprec MN^{9/10} + M^{9/10} N.
\end{align*}
\end{prop}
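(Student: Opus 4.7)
The plan is to adapt the Friedlander--Iwaniec Type~II argument (\cite[Section~21]{FI}) to the cubic setting, using the twisted multiplicativity of Lemma~\ref{twistmultiplemma} and the multiplier rule of Lemma~\ref{symbolmultiplemma}.

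\emph{Step 1: reduction to a bilinear sum with the symbol $(\cdot/\cdot)$.} With primary primitive generators $m, n$ of $\m, \n$ chosen as in Section~\ref{fixingsection}, Lemma~\ref{twistmultiplemma} gives $[mn] = [m][n](n/m)$. Absorbing the bounded spins $[m], [n]$ into $\alpha_\m, \beta_\n$ (both remain divisor-bounded) reduces the estimate to
\begin{align*}
S := \sum_{N_{12}(\m) \sim M}\sum_{N_{12}(\n) \sim N}\alpha_\m\beta_\n\Big(\frac{n}{m}\Big).
\end{align*}

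\emph{Step 2: Cauchy--Schwarz and the multiplier rule.} Applying Cauchy--Schwarz to smooth the $\n$-variable,
\begin{align*}
|S|^2 \pprec N \sum_{\m_1,\m_2}\alpha_{\m_1}\overline{\alpha_{\m_2}} \sum_{\n}\Big(\frac{n}{m_1}\Big)\Big(\frac{n}{m_2}\Big)^2,
\end{align*}
where we used $\overline{(n/m)} = (n/m)^2$ for a cubic character. With $q_i = N_{12/3}(m_i)$, $e = (m_1,\sigma(m_2^2))$ and $d = N_{12/3}(e)$, Lemma~\ref{symbolmultiplemma} factors the product as
\begin{align*}
\Big(\frac{n}{m_1}\Big)\Big(\frac{n}{m_2}\Big)^2 = \Big[\frac{r-\omega s}{d}\Big]\Big[\frac{r+\omega s}{q_1 q_2^2/d}\Big]
\end{align*}
for $n = r+is$, so the inner sum becomes a cubic character sum in $\Z[\zeta_3]$ over a box of diameter $\pprec N^{1/2}$.

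\emph{Step 3: inner character sum.} For fixed $(\m_1,\m_2)$, either $q_1q_2^2/d$ is associated to a cube in $\Z[\zeta_3]$---in which case the character $[\cdot/(q_1q_2^2/d)]$ is principal and the inner sum is only bounded trivially by $\pprec N$---or it is a non-cube, and a two-dimensional P\'olya--Vinogradov bound derived from the Poisson summation of Section~\ref{poissonsection} yields a saving of size $\pprec (M^3/d)^{1/2}$ over the trivial bound. The ``principal'' (cube) pairs form a sparse subset whose count, together with the character sum estimate for the rest, controls the final bound.

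\emph{Step 4: balancing and amplification.} Combining the above and amplifying by applying H\"older's inequality (with a suitably chosen exponent $k \geq 2$) before the Cauchy--Schwarz step, as in \cite[Section~21]{FI}, yields the stated bound with exponent $9/10$; the symmetric term $M^{9/10}N$ arises from using the reciprocity $(n/m)=(m/n)$ (a consequence of Lemma~\ref{twistmultiplemma}) to interchange the roles of $m$ and $n$. The main obstacle will be the careful counting of exceptional tuples $(\m_1,\dots,\m_k)$ for which a cube condition holds in $\Z[\zeta_3]$, together with tracking the lattice-point error from the cubic Poisson summation; optimising the H\"older exponent against these two contributions is what produces the explicit savings $1/10$, weaker than the corresponding savings in the quadratic setting precisely because the cubic character sums are weaker and the Eisenstein lattice-point count carries its own error term.
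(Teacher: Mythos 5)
Your Steps 1 and 2 match the paper's argument exactly: reduce to $Q(M,N)$ with the symbol $(z/w)$ via Lemma~\ref{twistmultiplemma}, apply Cauchy--Schwarz with a smooth majorant on the $z$-variable, and use Lemma~\ref{symbolmultiplemma} to factor $(z/w_1)(z/w_2)^2$. However, Step~3 diverges from the paper in an important way, and the subsequent assertions are not justified.

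In Step~3 you propose that for a non-cube modulus $q_1q_2^2/d$ one applies a two-dimensional P\'olya--Vinogradov bound saving $\pprec (M^3/d)^{1/2}$. The paper does \emph{not} take this route. Instead (Lemma~\ref{prelimtypeiilemma}) it first restricts to the range $N > M^{4+\eta}$ (using the trivial bound $Q \pprec MN \pprec M^{5}$ otherwise), so that after Poisson summation (Lemma~\ref{poissonlemma}) the frequency cutoff $H$ is below $1$; in that range the smoothed count in each residue class equals its main term up to $O_C(X^{-C})$, and the complete character sum $\sum_{\zeta \pmod{3q}}(\zeta/w_1)(\zeta/w_2)^2$ is \emph{identically zero} unless both $d$ and $q_1q_2^2/d$ are cubes. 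No Gauss-sum or P\'olya--Vinogradov estimate is needed. The cube pairs are then counted ($\pprec M$ of them), giving $Q(M,N) \pprec M^5 + M^{1/2}N$. Your P\'olya--Vinogradov alternative would require a full four-dimensional Poisson expansion and careful bounds on the resulting two-variable Gauss sums over $\Z[\zeta_3]/3q\Z[\zeta_3]$; you do not carry this out, and a naive accounting of it actually suggests a \emph{different} exponent than $9/10$, which indicates the claimed saving has not been tracked correctly.

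Step~4 then asserts the exponent $9/10$ without computing it. The paper obtains $Q(M,N) \pprec M^{1+4/k} + M^{1-1/2k}N$ by H\"older with explicit amplification exponent $k$, then chooses $k=5$ and symmetrizes via $(z/w) = (w/z)$ (Lemma~\ref{twistmultiplemma}); these details are essential and absent from your write-up. Finally, your closing explanation is incorrect: you attribute the $1/10$ saving to ``cubic character sums are weaker and the Eisenstein lattice-point count carries its own error term,'' but the paper's remark after the Type II section states the opposite --- the paper's Type II bound is actually \emph{sharper} than the corresponding bound in \cite{FI} (thanks to the smooth weight $F_{\sqrt N}$), and the weaker final exponent in Theorem~\ref{maintheorem} is caused entirely by the Type~I estimate, not Type~II.
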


\begin{remark} Note that $[\a]=0$ if $\a$ is not primitive. Thus, we may assume that the coefficients $\alpha$ and $\beta$ in the above are supported on primitive ideals.
\end{remark}

\begin{remark} Our Type I information is very weak but this is compensated by the fact that the Type II bound is non-trivial as soon as $M \gg x^\eta$ or $N \gg x^\eta$.
\end{remark}
Define
\begin{align*}
P_{12}(Y):= \prod_{\substack{N_{12} (\p) < Y }} \p.
\end{align*}
Note that the norm map induces a partial ordering on the set of ideals, and that for every prime ideal there are at most four prime ideals of the same norm. For any ideal $\d \subseteq \Z[\zeta_{12}]$ we set
\begin{align*}
S(\A_\d, Y) := \sum_{\substack{\n \\ N_{12}(\d \n) \sim x}} 1_{(\n,P_{12}(Y))=1} [\d \n].
\end{align*}
\emph{Proof of Theorem \ref{maintheorem}.}
Let $Z=x^\gamma$ for some $\gamma \in (0,1/2)$ which we will optimize later on. By Buchstab's identity
\begin{align*}
S(\A, 2 \sqrt{x}) = S(\A, Z)  - \sum_{\substack{Z \leq N_{12}(\p)  < 2 \sqrt{x}}} S(\A_\p, N_{12}(\p)) + O(E) =: S_{1}(\A) - S_{2}(\A) + O(E),
\end{align*}
where the error term $E$ consists of that part in $S_{2}(\A)$ where the implicit variable $\n$ is divisible by another prime ideal of same norm $N_{12}(\p)$ (we could also handle this part by fixing a complete ordering for prime ideals $\p$ but then we would later have to remove cross-conditions coming from this). We have trivially
\begin{align*}
E \leq \sum_{\substack{Z \leq N_{12}(\p)  < 2 \sqrt{x}}} \sum_{\substack{\n \\ N_{12}(\n) \sim x}} 1_{N_{12}(\p)^2 | N_{12}(\n)} \pprec \sum_{Z \leq k < 2\sqrt{x}} \sum_{n \sim x} 1_{k^2 | n} \ll x Z^{-1}.
\end{align*}

For the second sum we have (writing $\n= \p_1  \cdots \p_k$)
\begin{align} \nonumber
S_{2}(\A) &= \sum_{\substack{Z \leq N_{12}(\p)  < 2 \sqrt{x} }} \sum_{\substack{\n \\ N_{12}(\p \n) \sim x}} 1_{(\n,P_{12}(N_{12}(\p)))=1} [\p \n] \\ \nonumber
&= \sum_{k \ll 1} \sum_{\substack{Z \leq N_{12}(\p)  < 2 \sqrt{x}}} \, \sum_{\substack{N_{12}(\p) < N_{12}(\p_1 )< \cdots < N_{12}(\p_k)  \\ N_{12}(\p \p_1\cdots \p_k) \sim x}} [\p \p_1\cdots \p_k ] + O(\tilde{E})\\  \label{bound1}
& \pprec xZ^{-1/10} + xZ^{-1}
\end{align}
by Proposition \ref{typeiiprop} once we remove the cross-conditions $N_{12}(\p) < N_{12}(\p_1)$ and $N_{12}(\p \p_1\cdots \p_k) \sim x$ by Perron's formula (cf. for instance \cite[Chapter 3.2]{harman}, this works essentially the same in our situation since we apply it to the real quantities $N_{12}(\p_j)$). Here the error term $\tilde{E}$ consists of the part where $\p\p_1\cdots \p_k$ is divisible by a square, so that we have $\tilde{E} \ll x Z^{-1}$by a similar argument as with $E$ above.

For the first sum $S_1(\A)$ we use the M\"obius function to expand the condition $(\n,P_{12}(Z))=1$ to get
\begin{align*}
S_{1}(\A) &= \sum_{\d | P_{12}(Z)} \mu(\d) \sum_{\substack{\n \\ N_{12}(\d \n) \sim x}}  [\d\n] \\
&= \sum_{\substack{\d | P_{12}(Z)\\ N_{12}(\d) < Z}} \mu(\d) \sum_{\substack{\n \\ N_{12}(\d\n) \sim x}}  [\d \n]+ \sum_{\substack{\d | P_{12}(Z) \\ N_{12}(\d) \geq Z}} \mu(\d) \sum_{\substack{\n \\ N_{12}(\d \n) \sim x}}  [\d \n] =: S_{11}(\A) + S_{12}(\A)
\end{align*}
For the first sum we apply Proposition \ref{typeiprop} to get
\begin{align} \label{bound2}
S_{11}=\sum_{\substack{\d | P_{12}(Z)\\ N_{12}(\d) < Z}} \mu(\d) \sum_{\substack{\n \\ N_{12}(\d\n) \sim x}}  [\d \n]\pprec x^{11/12} Z^{13/12}.
\end{align}
For the second sum we write $\d = \p_1\cdots \p_k$ for $N_{12}(\p_1) \leq \cdots \leq N_{12}(\p_k) < Z$. Since there are at most four prime ideals of the same norm and $\d$ is square free, by the greedy algorithm there is a unique $\ell \leq k$ such that $\d=\d_1\d_2$ with
\begin{align*}
\d_1 =& \p_1\cdots \p_\ell, \quad \quad \d_2 = \p_{\ell+1} \cdots \p_k, \\
N_{12}(\d_1) \in [Z,Z^5], &\quad N_{12}(\d_1') < Z, \quad \text{\and} \quad N_{12}(\p_\ell) < N_{12}(\p_{\ell+1}),
\end{align*}
where $\d_1' := \p_1 \cdots \p_{\ell-j}$ if $j$ is the largest number such that $N_{12}(\p_{\ell-j+1}) =  N_{12}(\p_{\ell})$ (that is, we apply the greedy algorithm for groups of at most four prime ideals of the same norm). Hence, the second sum $S_{12}(\A)$ can be partitioned as
\begin{align*}
\sum_{k \ll \log x}(-1)^k \sum_{\ell \leq k} \sum_{\substack{N_{12}(\d_1) \in [Z,Z^5] \\ N_{12}(\d_1') < Z \\
\d_1=\p_1 \cdots \p_\ell  \, \text{square free}\\
N_{12}(\p_1) \leq \cdots \leq N_{12}(\p_\ell) < Z}} \sum_{\substack{\d_2 = \p_{\ell+1} \cdots \p_k \, \text{square free}\\ 
N_{12}(\p_{\ell}) < N_{12}(\p_{\ell+1}) \leq \cdots \leq N_{12}(\p_k) < Z}} \sum_{\substack{\n \\ N_{12}(\d_1)N_{12}(\d_2 \n) \sim x}}  [\d_1 \d_2 \n] .
\end{align*}
The cross-conditions $N_{12}(\p_{\ell}) < N_{12}(\p_{\ell+1})$ and $N_{12}(\d_1)N_{12}(\d_2 \n) \sim x$ can now be removed by Perron's formula, so that by Proposition \ref{typeiiprop} we get
\begin{align} \label{bound3}
\sum_{\substack{\d | P(Z) \\ N_{12}(\d) \geq Z}} \mu(\d) \sum_{\substack{\n \\ N_{12}(\d \n) \sim x}}  [\d \n] \pprec \frac{x}{Z}Z^{9/10} + Z^5\bigg(\frac{x}{Z^5}\bigg)^{9/10}.
\end{align}
 Combining the bounds (\ref{bound1}), (\ref{bound2}), and (\ref{bound3}), and choosing $Z:=x^{5/71}$ (note that then $Z^5 < x^{1/2}$) to optimize we get
\begin{align*}
S(\A, 2 \sqrt{x}) \pprec x Z^{-1/10} + x^{11/12} Z^{13/12} \ll_\epsilon x^{1-1/142+\epsilon} \ll x^{1-1/143}.
\end{align*} \qed
\section{Fixing primary generators} \label{fixingsection}
For the proofs of our arithmetic information (Propositions \ref{typeiprop} and \ref{typeiiprop}) we need to fix primary generators of ideals of $\Z[\zeta_{12}]$ in a consistent manner, and in such a way that the resulting conditions do not cause problems later on. Luckily fixing an embedding of $\Z[\zeta_{12}]$ in $\C$ along with Lemma \ref{unitmod3lemma} allows us to do just this. We choose the embedding which maps $\zeta_{12}$ to $e^{2\pi i /12} \in \C$. For any $z=r+is \in \Z[\zeta_{12}]$, $r,s \in \Z[\zeta_3]$ we let $|z|:= |r+is|$ denote the norm of the complex number $r+is \in \mathbb{C}$. Note that then
\begin{align*}
|\epsilon_0| = \bigg| \frac{1+\sqrt{3}}{1-i}\bigg| = (1+\sqrt{3})/\sqrt{2} >1.
\end{align*}
\begin{lemma} \label{fixinglemma} For every ideal $\a$ coprime to 3 there exists a unique generator $z=r+is \in \Z[\zeta_{12}]$ of $\a$ such that $z \equiv 1 \,\, (3)$ and 
\begin{align} \label{normconstraint}
N_{12}(z)^{1/4} \leq |z| < N_{12}(z)^{1/4} |\epsilon_0|^6.
\end{align}
Furthermore, for such a $z=r+is$ we have $|r|, |s| \, \ll |z|$.
\end{lemma}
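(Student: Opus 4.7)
The plan is to separate the two claims and handle them using Lemma \ref{unitmod3lemma} for the existence/uniqueness, and a short calculation involving a non-trivial Galois conjugate for the size bound.

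First I would establish the set of admissible units. Starting from any generator of $\a$, the fact stated before Lemma \ref{unitmod3lemma} produces some generator $z_0 \equiv 1 \, (3)$. Every other generator equals $\mu z_0$ for some unit $\mu$, and the condition $\mu z_0 \equiv 1 \, (3)$ is equivalent to $\mu \equiv 1 \, (3)$. By Lemma \ref{unitmod3lemma} we have $-i\epsilon_0^6 \equiv 1 \, (3)$, and combining this with the enumeration of primary units $\{\pm (i\epsilon_0^6)^k\}$ in that lemma, the set of units congruent to $1 \pmod 3$ is exactly the infinite cyclic group $\{(-i\epsilon_0^6)^k : k \in \Z\}$. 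Thus the generators $z$ of $\a$ with $z \equiv 1 \, (3)$ are exactly $\{(-i\epsilon_0^6)^k z_0 : k \in \Z\}$.

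Next I turn the normalization (\ref{normconstraint}) into a choice of $k$. Since $N_{12}(z)$ is invariant under multiplication by units, write $A:=N_{12}(z)^{1/4}$, which depends only on $\a$. The condition $A\le |(-i\epsilon_0^6)^k z_0|<A|\epsilon_0|^6$ becomes
\begin{equation*}
\frac{A}{|z_0|}\le |\epsilon_0|^{6k} < \frac{A|\epsilon_0|^6}{|z_0|}.
\end{equation*}
Because $|\epsilon_0|=(1+\sqrt{3})/\sqrt{2}>1$, the values $|\epsilon_0|^{6k}$ form a strictly increasing geometric progression with ratio $|\epsilon_0|^6$, and therefore exactly one integer $k$ satisfies the displayed inequalities. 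This gives both existence and uniqueness.

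For the size bound I would use the Galois automorphism $\sigma: \zeta_{12}\mapsto \zeta_{12}^5$, which fixes $i$ and sends $\zeta_3$ to $\overline{\zeta_3}$, so that $\sigma(z)=\bar r + i\bar s$ where the bar denotes complex conjugation on $\Z[\zeta_3]$. In the embedding $\iota$ fixed by the paper, a direct expansion of $|z|^2=z\overline{z}$ and $|\sigma(z)|^2=\sigma(z)\overline{\sigma(z)}$ (using $\overline{\iota(r)}=\iota(\bar r)$) gives the pair of identities
\begin{equation*}
|z|^2+|\sigma(z)|^2 = 2(|r|^2+|s|^2), \qquad |z|^2 |\sigma(z)|^2 = N_{12}(z),
\end{equation*}
where in the second identity I split $N_{12}(z)=(z\tau_{11}(z))(\tau_5(z)\tau_7(z))$ and observe that $\tau_7(z)=\overline{\sigma(z)}$. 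The assumption $|z|\ge N_{12}(z)^{1/4}$ combined with the product identity forces $|\sigma(z)|\le |z|$, and substituting into the sum identity yields $|r|^2+|s|^2 \le |z|^2$, hence $|r|,|s|\le |z|$, which is the claimed bound.

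The only place that requires care is the identification of the group of units $\equiv 1\,(3)$; once Lemma \ref{unitmod3lemma} is in hand, the rest is a geometric-progression pigeonhole and a short Galois-theoretic calculation, so no substantive obstacle is expected.
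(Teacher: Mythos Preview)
Your proof is correct and follows essentially the same approach as the paper. For existence/uniqueness you both identify the generators $\equiv 1\,(3)$ as the orbit $\{(-i\epsilon_0^6)^k z_0\}$ via Lemma~\ref{unitmod3lemma} and pick the unique $k$ by a geometric-progression pigeonhole; for the size bound the paper uses the conjugate $r-is$ and the factorization $|r^2+s^2|=|r+is|\,|r-is|$ together with the triangle inequality, which is the same mechanism as your product identity $|z|^2|\sigma(z)|^2=N_{12}(z)$ (indeed $|r-is|=|\sigma(z)|$), while your additional sum identity merely sharpens the implicit constant.
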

\begin{proof}
If $z_0 \equiv 1 \,\, (3)$ is a  generator of $\a$, then by Lemma \ref{unitmod3lemma} the associates of $z_0$ which are $\equiv 1 \, (3)$  are precisely $(-i\epsilon_0^6)^{k} z_0$ with $k \in \Z$. Clearly there is a unique $k$ such that $z=(-i\epsilon_0^6)^{k} z_0$ satisfies (\ref{normconstraint}). From (\ref{normconstraint}) it follows that
\begin{align*}
|r^2+s^2|^{1/2} \, \asymp \, |r+is|,
\end{align*}
so that $|r-is| \, \asymp |r+is|$ which implies $|r|, |s| \, \ll |z|$.
\end{proof}
In the summations we will denote this condition by $\sumw$, so that we may write, for example,
\begin{align}
\sumw_{N_{12}(z) \sim N}  [z] \,= \sum_{\substack{N_{12}(z) \sim N \\
z \equiv 1 \,\, (3) \\ N_{12}(z)^{1/4} \leq |z| < N_{12}(z)^{1/4} |\epsilon_0|^6}} [z] \, = \sum_{N_{12}(\n) \sim N} [\n].
\end{align}

\section{Truncated Poisson summation formula on $\Z[\zeta_{3}]$} \label{poissonsection}
In the proofs of Propositions \ref{typeiprop} and \ref{typeiiprop} we will need a version of the Poisson summation formula on $\Z[\zeta_3]$. For the lemma we fix an embedding identifying $\zeta_3$ with $e^{2 \pi i /3} \in \C$, so that any element of $\Z[\zeta_3]$ is viewed as a complex number. For $z \in \C$ denote
\[z=z^{(1)} + \zeta_3 z^{(2)}\]
 with $z^{(1)},z^{(2)} \in \R$. For $q \in \Z[\zeta_{3}] \setminus \{0\}$, $\beta \in \C$, and $h_1,h_2 \in \Z$ we define (denoting $e_q(x):= e^{2\pi i  x /q}$)
\begin{align*}
\psi_q^{(h_1,h_2)}(\beta) := e_{N_3(q)}(h_1(\beta \bar{q})^{(1)} + h_2(\beta \bar{q})^{(2)}),
\end{align*}
so that $\psi_q^{(h_1,h_2)}$ is an additive character of $\Z[\zeta_{3}]/q\Z[\zeta_{3}]$ (here $\bar{q}$ denotes the complex conjugate).

\begin{lemma} \label{poissonlemma} \emph{\textbf{(Truncated Poisson summation formula on $\Z[\zeta_{3}]$).}} Fix $\beta,q \in \Z[\zeta_{3}]$ and $x_0,y_0 \in \R$. For $K > 1$ let 
\begin{align*}
G_K(x,y):= G\bigg(\frac{x-x_0}{K}, \frac{y-y_0}{K}\bigg)
\end{align*}
for some fixed compactly supported $C^{\infty}$-smooth function $G: \R^2 \to \C$. We define $G_K:\C \to \C$ by setting $G_K(\alpha):= G_K(\alpha^{(1)},\alpha^{(2)})$. Then for any $C, \epsilon > 0$ with $H:= K^{\epsilon/2}N_3(q)^{1/2}/K$ we have
\begin{align*}
\sum_{\substack{\alpha \in \Z[\zeta_{3}] \\ \alpha \equiv \beta \,\, (q)}}G_K(\alpha) =\frac{1}{N_3(q)}\sum_{\substack{\alpha \in \Z[\zeta_{3}] }}G_K(\alpha)+\frac{K^{\epsilon}}{H^2} \sum_{\substack{|h_1|,|h_2|\leq H \\ (h_1,h_2) \neq (0,0) } } c_{h_1,h_2}  \psi_q^{(h_1,h_2)}(-\beta) + O_{G,C,\epsilon}(K^{-C})
\end{align*}
for some bounded coefficients $c_{h_1,h_2}=c_{h_1,h_2,G,K,q}$ satisfying $|c_{h_1,h_2}|\ll_G 1$.
\end{lemma}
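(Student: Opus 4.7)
The plan is a direct application of Poisson summation on $\R^2$ to the coset $\beta + q\Z[\zeta_3]$, using the identification of $\Z[\zeta_3]$ with the lattice $\Z^2$ via the basis $\{1,\zeta_3\}$. Since every $\alpha \equiv \beta\,(q)$ writes uniquely as $\beta + q\gamma$ with $\gamma \in \Z[\zeta_3]$, the left-hand side equals $\sum_{\gamma \in \Z^2} F(\gamma)$ with $F(\gamma):=G_K(\beta+q\gamma)$, and the classical formula gives $\sum_{\gamma \in \Z^2} F(\gamma) = \sum_{h \in \Z^2} \widehat F(h)$.

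Next I would compute $\widehat F$ explicitly. Substituting $z = \beta + qy$ produces the Jacobian $N_3(q)$ from multiplication by $q$, and the relation $y = (z-\beta)\bar q/N_3(q)$ rewrites $h_1 y^{(1)} + h_2 y^{(2)}$ in terms of $(z\bar q)^{(j)}$ and $(\beta\bar q)^{(j)}$. Pulling the $\beta$-dependent phase outside the integral yields
\[
\widehat F(h_1,h_2) \,=\, \frac{\psi_q^{(h_1,h_2)}(-\beta)}{N_3(q)}\int_{\R^2} G_K(z)\,e_{N_3(q)}\!\bigl(-h_1(z\bar q)^{(1)} - h_2(z\bar q)^{(2)}\bigr)\,dz,
\]
and rescaling via $z = Kw + (x_0,y_0)$ converts the integral into $K^2$ times a unimodular phase in $(x_0,y_0)$ times a value of $\widehat G$ at a frequency of magnitude $\asymp K(|h_1|+|h_2|)/N_3(q)^{1/2}$.

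I would then split the Poisson dual sum into three parts. The zero-frequency term $\widehat F(0) = N_3(q)^{-1}\int G_K$ agrees with $N_3(q)^{-1}\sum_{\alpha \in \Z[\zeta_3]} G_K(\alpha)$ up to $O_{G,C}(K^{-C})$ by a second application of Poisson summation to the unrestricted sum, furnishing the main term. For $0 \neq (h_1,h_2)$ with $\max(|h_1|,|h_2|) \leq H$, the rescaled Fourier frequency is at most $O(K^{\epsilon/2})$, so $|\widehat G|$ there is bounded and $\widehat F(h)$ takes the form $(K^2/N_3(q))\,c_{h_1,h_2}\,\psi_q^{(h_1,h_2)}(-\beta)$ with $|c_{h_1,h_2}|\ll_G 1$; since $K^2/N_3(q) = K^\epsilon/H^2$ by the definition of $H$, this is exactly the middle sum in the lemma. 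For $\max(|h_1|,|h_2|) > H$ the frequency exceeds $K^{\epsilon/2}$, so the Schwartz decay $|\widehat G(\xi)| \ll_{G,N} (1+|\xi|)^{-N}$ beats any fixed power $K^{-C}$ after summing over $h$.

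There is no conceptual difficulty; everything reduces to standard truncated Poisson with a bit of bookkeeping for the lattice $\Z[\zeta_3]$. The only small point to record is that $\psi_q^{(h_1,h_2)}$ genuinely descends to a character of $\Z[\zeta_3]/q\Z[\zeta_3]$, which follows from $(q\gamma)\bar q = N_3(q)\gamma$ making the shift $\beta \to \beta + q\gamma$ invisible in $e_{N_3(q)}(\cdot)$. The threshold $H = K^{\epsilon/2}N_3(q)^{1/2}/K$ is precisely the one balancing Fourier decay against the number of surviving frequencies, which is why it appears in the statement.
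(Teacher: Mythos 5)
Your proposal is correct and takes essentially the same route as the paper: substitute $\alpha = \beta + q\gamma$, apply Poisson summation on the rank-two lattice $\Z[\zeta_3] \cong \Z^2$, peel off the $\beta$-dependent phase $\psi_q^{(h_1,h_2)}(-\beta)$, match the zero frequency to the main term by a second application of Poisson, and truncate at $H$ using Schwartz decay of $\widehat G$, with $K^2/N_3(q) = K^\epsilon/H^2$ accounting for the stated normalization. The paper merely writes the same computation as two one-dimensional Poisson applications with an explicit change of variables; the content is identical.
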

\begin{proof} Recall that for $\alpha,q \in \Z[\zeta_3]$ we have
\begin{align*}
\alpha q =  \alpha^{(1)} q^{(1)} - \alpha^{(2)} q^{(2)} + \zeta_3 (\alpha^{(2)} q^{(1)} + \alpha^{(1)} q^{(2)} - \alpha^{(2)} q^{(2)}).
\end{align*}
Substituting $\alpha \mapsto \alpha q+\beta$, we get by two applications of the usual Poisson summation formula (denoting $x:=x_1+\zeta_3 x_2 \in \C$)
\begin{align*}
&\sum_{\substack{\alpha \in \Z[\zeta_{3}] \\ \alpha \equiv \beta \,\, (q)}}G_K(\alpha) = \sum_{\alpha \in \Z[\zeta_{3}]}  G_K(\alpha q +\beta) = \sum_{\alpha \in \Z[\zeta_{3}]} G_K((\alpha q)^{(1)} + \beta^{(1)},(\alpha q)^{(2)} + \beta^{(2)})\\ 
&=  \sum_{\alpha^{(1)}, \alpha^{(2)} \in \Z} G_K(\alpha^{(1)} q^{(1)} - \alpha^{(2)} q^{(2)} + \beta^{(1)},\alpha^{(2)} q^{(1)} + \alpha^{(1)} q^{(2)} - \alpha^{(2)} q^{(2)} + \beta^{(2)})\\\
& = \sum_{h_1,h_2} \iint G_K(x_1 q^{(1)} - x_2 q^{(2)} + \beta^{(1)},x_2 q^{(1)} + x_1 q^{(2)} - x_2 q^{(2)} + \beta^{(2)}) e(h_1x_1+h_2x_2) d x_1 d x_2\\
&= \sum_{h_1,h_2} \iint G_K((xq)^{(1)}+\beta^{(1)},(xq)^{(2)}+\beta^{(2)}) e(h_1x_1+h_2x_2) d x_1 d x_2 \\
&= \sum_{h_1,h_2} \iint G_K(xq +\beta) e(h_1x_1+h_2x_2) d x_1 d x_2,
\end{align*}
where $xq$ is computed as a multiplication of two complex numbers, so that 
\[
xq = (xq)^{(1)} + \zeta_3 (xq)^{(2)}.
\]
We make the change of variables 
\begin{align*}
x_j \mapsto x_j K/N_3(q)^{1/2} - (\beta \bar{q})^{(j)}/N_3(q) \quad  \text{for}\quad j\in \{1,2\}
\end{align*}
so that $xq+\beta$ is mapped to
\begin{align*}
&\bigg(x_1 K/N_3(q)^{1/2}  + \zeta_3 x_2 K/N_3(q)^{1/2}  - \beta \bar{q} / N_3(q)\bigg) \cdot q + \beta =\frac{K}{N_3(q)^{1/2}} xq.
\end{align*}
We get
\begin{align*}
\sum_{\substack{\alpha \in \Z[\zeta_{3}] \\ \alpha \equiv \beta \,\, (q)}}G_K(\alpha) = \frac{K^2}{N_3(q)} \sum_{h_1,h_2} c_{h_1,h_2}  \psi_q^{(h_1,h_2)}(-\beta),
\end{align*}
where
\begin{align*}
&c_{h_1,h_2} := \iint G_K \bigg(\frac{K}{N_3(q)^{1/2}} xq\bigg)  e\bigg(\frac{K(h_1x_1+h_2x_2)}{N_3(q)^{1/2}}\bigg) d x_1 d x_2  \\
& = \iint G\bigg(\frac{(xq)^{(1)}}{N_3(q)^{1/2}}-\frac{x_0}{K},\frac{(xq)^{(2)}}{N_3(q)^{1/2}}-\frac{y_0}{K}\bigg) e\bigg(\frac{K(h_1x_1+h_2x_2)}{N_3(q)^{1/2}}\bigg) d x_1 d x_2 \\
&=\psi_q^{(h_1,h_2)}(x_0 + \zeta_3 y_0)  \iint G\bigg(\frac{(xq)^{(1)}}{N_3(q)^{1/2}},\frac{(xq)^{(2)}}{N_3(q)^{1/2}}\bigg) e\bigg(\frac{K(h_1x_1+h_2x_2)}{N_3(q)^{1/2}}\bigg) d x_1 d x_2
\end{align*}
by making the translation (denoting $z_0:=(x_0 + \zeta_3 y_0)$)
\begin{align*}
x \mapsto x+ \frac{1}{K N_3(q)^{1/2}}z_0 \bar{q},
\end{align*}
that is,
\begin{align*}
x_j \mapsto x_j + \frac{1}{K N_3(q)^{1/2}} (z_0 \bar{q})^{(j)}.
\end{align*}

For all $h_1,h_2$ we have the trivial estimate $|c_{h_1,h_2}| \leq c_{0,0} \ll_G 1$ (note that $x \mapsto xq/N^{1/2}_3(q)$ is a rotation in $\C$ so that $c_{0,0}$ is independent of $q$). As usual, $h_1=h_2=0$ gives us the main term, since by another double application of Poisson summation
 \begin{align*}
 \frac{K^2}{N_3(q)} c_{0,0} =  \frac{K^2}{N_3(q)}  \iint G\bigg(\frac{(xq)^{(1)}}{N_3(q)^{1/2}},\frac{(xq)^{(2)}}{N_3(q)^{1/2}}\bigg) d x_1 d x_2 = \frac{K^2}{N_3(q)}  \iint G(x_1,x_2) d x_1 d x_2 \\
 =  \frac{1}{N_3(q)}  \iint G_K(x_1,x_2) d x_1 d x_2 =  \frac{1}{N_3(q)}\sum_{\substack{\alpha \in \Z[\zeta_{3}] }}G_K(\alpha) + O_{C}(K^{-C}).
 \end{align*}

For $|h_1| >H$ or $|h_2| > H$ we can iterate integration by parts to show that the contribution from this part is $\ll_{C,\epsilon} K^{-C}.$ 
\end{proof}

\section{Type I sums} \label{typeisection}
Using the notation of Section \ref{fixingsection}, for any primary $w \in \Z[\zeta_{12}]$ define
\begin{align*}
\K_w(N):= \sumw_{N_{12}(z) \sim N} [wz]
\end{align*}
In this section we show (analogously to \cite[Proposition 22.1]{FI}) the following proposition, which implies Proposition \ref{typeiprop}.
\begin{prop} \label{linearsumprop}We have
\begin{align*}
\K_w(N)  \pprec N^{11/12} N_{12}(w)^{1/6}
\end{align*}
\end{prop}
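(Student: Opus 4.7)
The plan is to exploit twisted multiplicativity (Lemma \ref{twistmultiplemma}) to rewrite $\K_w(N)$ as a single cubic character sum on $\Z[\zeta_3]$, and then estimate that character sum by smoothed Poisson summation via Lemma \ref{poissonlemma}. One may assume $w=u+iv$ is primary primitive (otherwise factor out the Eisenstein part of $w$ and reduce to this case). Then Lemma \ref{twistmultiplemma} gives
\begin{align*}
\K_w(N) = [w]\sumw_{N_{12}(z)\sim N}[s/r]\,[(r+\omega s)/q],
\end{align*}
where $q=N_{12/3}(w)$ and $\omega\in\Z[\zeta_3]$ is any fixed lift of $-vu^{-1}\,(q)$, so that $\omega^2\equiv -1\,(q)$. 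By Lemma \ref{fixinglemma} we have $N_3(r),N_3(s)\ll\sqrt{N}$.

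Next I would swap summation and fix $r$ coprime to $q$ (the character vanishes otherwise, a restriction handled at the cost of a divisor-bounded factor). The change of variables $s=t+\omega r$ collapses the two characters: from $s\equiv t\,(r)$ and $r+\omega s\equiv\omega t\,(q)$ together with the Chinese remainder theorem,
\begin{align*}
[s/r]\,[(r+\omega s)/q] \,=\, [\omega/q]\cdot[t/(rq)],
\end{align*}
so the inner sum reduces to $[\omega/q]\sum_t[t/(rq)]$ where $t$ ranges over the translated image of the $s$-region in $\Z[\zeta_3]$, a planar region of diameter $\asymp N^{1/4}$.

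For each $r$ I would then smooth the indicator of this region by a $C^\infty$ function of the type appearing in Lemma \ref{poissonlemma} and apply that lemma with modulus $rq$. The zero-frequency term vanishes since $[\cdot/(rq)]$ is non-trivial, and each non-zero frequency contributes a complete Gauss sum of magnitude $N_3(rq)^{1/2}=(N_3(r)N_{12}(w))^{1/2}$; combining this with the rapid decay of the Fourier coefficients $c_{h_1,h_2}$ yields a P\'olya--Vinogradov style estimate for the inner sum. Summing the resulting bound over $r\in\Z[\zeta_3]$ with $N_3(r)\ll\sqrt{N}$ and using $\sum_{N_3(r)\leq R}N_3(r)^{1/2}\ll R^{3/2}$ delivers the desired bound (the trivial estimate $\K_w(N)\ll N$ already beats the claim when $N_{12}(w)>\sqrt{N}$, so only the range $N_{12}(w)\leq\sqrt{N}$ needs the character-sum input).

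The main technical obstacle is the smoothing step: the $s$-region defined by the fundamental domain of Lemma \ref{fixinglemma} intersected with $N_{12}(z)\sim N$ is annular rather than box-shaped, and its boundary is not a product of smooth one-dimensional curves compatible with the lattice structure of $\Z[\zeta_3]$. Approximating its indicator by $G_K$ inevitably produces a boundary discrepancy which, when summed over $r$, is exactly the error term in a lattice point counting problem on $\Z[\zeta_3]$. It is this discrepancy that is responsible for the suboptimal exponent $N^{11/12}$ here (as opposed to the sharper estimate one would get for a purely Fourier-smooth region) and, through the sieve optimization of Section \ref{sievesection}, for the final exponent $1/143$ in Theorem \ref{maintheorem}.
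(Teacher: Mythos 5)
You've identified the right framework, and it is the one the paper uses: twisted multiplicativity (Lemma \ref{twistmultiplemma}), the shift $s\mapsto s+\omega r$ to collapse the two characters into a single cubic character modulo $rq$, and then the smoothed Poisson/P\'olya--Vinogradov estimate. But the quantitative part of your argument has a real gap, and the bound you claim to derive is not in fact the one in the proposition.

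Your step "smooth the indicator of this region by a $C^\infty$ function of the type appearing in Lemma \ref{poissonlemma}" cannot be done with a single $G_K$: the region $I(r)$ has diameter $\asymp N^{1/4}$ but is bounded by curved arcs (it sits inside an annulus $|s-r(i-\omega)|\asymp N^{1/4}$ intersected with the fundamental domain constraint), so no single smooth bump with plateau matching $1_{I(r)}$ exists. You recognize this as "the main technical obstacle," but your sketch never resolves it, and your summation $\sum_{N_3(r)\le R}N_3(r)^{1/2}\ll R^{3/2}$ with $R=\sqrt{N}$ gives $N^{3/4}N_3(q)^{1/2}$, which is not $N^{11/12}N_{12}(w)^{1/6}$; it also silently presumes a single P\'olya--Vinogradov application per $r$, which is exactly what is not available. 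What is actually needed is a finer-than-dyadic smooth partition of unity into boxes of side $K$ (an optimization parameter): $I(r)$ is tiled by $\asymp N^{1/2}/K^2$ interior boxes, each contributing $\pprec\sqrt{N_3(rq)}$ via Lemma \ref{pvlemma}, plus $\asymp N^{1/4}/K$ boundary boxes, each estimated trivially by $K^2$. Including the (omitted in your sketch) degenerate case where $rq$ is a perfect cube, for which the character is principal and one bounds trivially by $\sqrt{N}$ (summing to $\pprec N^{2/3}$), this yields $\K_w(N)\pprec N^{2/3}+N^{3/4}K+N^{5/4}N_3(q)^{1/2}K^{-2}$, and optimizing $K=N^{1/6}N_3(q)^{1/6}$ gives the claimed $N^{11/12}N_{12}(w)^{1/6}$. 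So the route is correct, but the key quantitative mechanism---the $K$-partition and the three-term balance---is missing, and as written the proposal does not actually produce the stated exponent.
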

For the proof we need a generalization of the P\'olya-Vinogradov estimate for short character sums. Unfortunately on $\Z[\zeta_{3}]$ we do not quite have the usual square-root bound since the estimate relies on the error term in counting lattice points; on a number field of degree $d$ and a primitive non-principal character $\chi$ modulo $\mathfrak{q}$ Landau's generalization gives 
\begin{align*}
\sum_{N(\mathfrak{a}) \leq x} \chi(\mathfrak{a}) \ll (N \mathfrak{q})^{1/(d+1)} (\log N (\mathfrak{q}))^d x^{(d-1)/(d+1)}.
\end{align*}
However, for a smoothed version we have the P\'olya-Vinogradov estimate in the usual form. Also, since we require the bound not only for short sums near 0 but in more general small sets, the smoothed version is more convenient for us. Unfortunately in our application we need to transition from a smoothed version to a sharp cut-off, which causes us to lose a power of $x$ compared to the results in \cite{FI}.

\begin{lemma} \label{pvlemma} Fix $\beta,q \in \Z[\zeta_{3}]$ and $x_0,y_0 \in \R$. For $K \gg 1$ let 
\begin{align*}
G_K(x,y):= G\bigg(\frac{x-x_0}{K}, \frac{y-y_0}{K}\bigg)
\end{align*}
for some fixed compactly supported $C^{\infty}$-smooth function $G$. For $\alpha \in \Z[\zeta_{3}]$, let $G_K(\alpha):= G_K(\alpha^{(1)},\alpha^{(2)})$. If $q \in \Z[\zeta_{3}]$ is primary primitive and not a perfect cube, then
\begin{align*}
 \sum_{s\equiv t \, (3)} G_K(s) \bigg[ \frac{s}{q}\bigg] \pprec \sqrt{N_{3}(q)} .
\end{align*}
\end{lemma}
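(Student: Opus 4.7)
The plan is to run a standard Polya--Vinogradov argument over $\Z[\zeta_3]$ by combining the smooth Poisson summation formula (Lemma \ref{poissonlemma}) with Gauss sum estimates for the cubic residue character, treating the joint congruence modulo $3$ and the multiplicative character modulo $q$ as a single additive periodic weight modulo $3q$.

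Concretely, since $q$ is primary (so in particular $(q,3)=1$), the Chinese Remainder Theorem lets me write
\begin{align*}
1_{s\equiv t\,(3)} \Bigl[\tfrac{s}{q}\Bigr] \;=\; \sum_{\beta\,(3q)} c_\beta \cdot 1_{s\equiv \beta\,(3q)}, \qquad c_\beta := \Bigl[\tfrac{\beta}{q}\Bigr] 1_{\beta\equiv t\,(3)},
\end{align*}
so that the target sum decomposes as $\sum_{\beta\,(3q)} c_\beta \sum_{s\equiv \beta\,(3q)} G_K(s)$. First I would apply Lemma \ref{poissonlemma} with modulus $3q$ (so $N_3(3q)=9N_3(q)$ and the dual length is $H \asymp K^{\epsilon/2}N_3(q)^{1/2}/K$) to each inner sum. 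The arising main term is proportional to $\sum_\beta c_\beta = 3\sum_{\beta\,(q)} [\beta/q]$, and this vanishes because the hypothesis that $q$ is not a perfect cube forces $[\,\cdot/q]$ to be a non-trivial character of $(\Z[\zeta_3]/q)^\times$ (and the character vanishes on non-units).

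The substantial step is the oscillatory contribution, which after swapping the order of summation reads
\begin{align*}
\frac{K^\epsilon}{H^2}\sum_{\substack{|h_1|,|h_2|\leq H \\ (h_1,h_2)\neq 0}} c'_{h_1,h_2}\, T(h_1,h_2), \qquad T(h_1,h_2):=\sum_{\substack{\beta\,(3q)\\ \beta\equiv t\,(3)}}\Bigl[\tfrac{\beta}{q}\Bigr]\psi_{3q}^{(h_1,h_2)}(-\beta).
\end{align*}
Using CRT to split $\beta$ into its residues mod $3$ and mod $q$, the factor $T(h_1,h_2)$ factorises as the mod-$3$ piece (a bounded sum of additive characters) times a twisted Gauss sum for the cubic character mod $q$ against an additive frequency determined by $(h_1,h_2)$. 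The standard bound for such Gauss sums, even when $[\,\cdot/q]$ is imprimitive, gives $|T(h_1,h_2)|\leq \tau_k(q)\sqrt{N_3(q)}\pprec \sqrt{N_3(q)}$. Combining with $|c'_{h_1,h_2}|\ll_G 1$ and the trivial count of $\ll H^2$ frequencies yields $\pprec \sqrt{N_3(q)}$, as required, after absorbing the $K^\epsilon$ factor and the $O(K^{-C})$ tail.

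The main obstacle I anticipate is bookkeeping around the Gauss sum factorisation: I must ensure the additive frequency that survives the CRT decomposition does not accidentally kill the non-trivial multiplicative character (so that the Gauss sum really is $\ll\sqrt{N_3(q)}$ rather than $\asymp N_3(q)$), and handle the case when $q$ is not squarefree so that $[\,\cdot/q]$ is imprimitive but still nontrivial. This is routine but needs a careful divisor-bound argument. A secondary, smaller issue is the range of $K$: when $K\gg N_3(q)^{1/2+\epsilon}$ we have $H<1$ and only the (vanishing) main term survives up to $O(K^{-C})$, while for small $K$ the trivial bound $\ll K^2$ is already admissible; one must just check all ranges are covered.
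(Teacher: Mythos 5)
Your proposal is correct and follows essentially the same route as the paper: decompose the congruence $s\equiv t\,(3)$ and the character $[\cdot/q]$ into residue classes modulo $3q$, apply Lemma~\ref{poissonlemma}, observe the main term vanishes since $[\cdot/q]$ is a non-trivial character of $(\Z[\zeta_3]/q\Z[\zeta_3])^\times$, and bound the dual oscillatory terms by Gauss sum estimates $\pprec\sqrt{N_3(q)}$. The only cosmetic difference is that the paper expands the congruence $s\equiv t\,(3)$ via orthogonality of additive characters of $\Z[\zeta_3]/3\Z[\zeta_3]$ before applying Poisson, whereas you absorb it directly into the coefficients $c_\beta$ by CRT; also, your constant in $\sum_\beta c_\beta$ should be $1$ rather than $3$ (there is exactly one residue mod $3$ satisfying $\beta\equiv t\,(3)$), but this is immaterial since the sum vanishes.
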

\begin{proof}
By Lemma \ref{poissonlemma} we have
\begin{align*}
 \sum_{s\equiv t \, (3)} G_K(s) \bigg[ \frac{s}{q}\bigg]  &= \frac{1}{9}\sum_{\psi \in \widehat{\Z[\zeta_{3}]/3\Z[\zeta_{3}]}}\psi(-t)\sum_{\beta \, \,(3q)} \psi(\beta) \bigg[ \frac{\beta}{q}\bigg]\sum_{s \equiv \beta\,(3q)} G_K(s)  \\
& \pprec \max_{\psi,h_1,h_2} \sum_{\beta \, \,(3q)}  \bigg[ \frac{\beta}{q}\bigg] \psi_{3q}^{(h_1,h_2)}(-\beta) \psi(\beta)  \pprec \sqrt{N_{3}(q)}
\end{align*}
by the standard bound for Gauss sums on $\Z[\zeta_{3}]/3q\Z[\zeta_{3}]$ (proved by exactly the same argument as in the classical case on $\Z$). Note that the main term (corresponding to $(h_1,h_2)=(0,0)$) is 0 by orthogonality of multiplicative characters.
\end{proof}

\emph{Proof of Proposition \ref{linearsumprop}}. By Lemma \ref{twistmultiplemma}, if $q:= N_{12/3}(w)$, then for some $\omega^2+1 \equiv 0 \,\, (q)$ denoting $z=r+is$, $r,s \in \Z[\zeta_3]$
\begin{align*}
\K_w(N) \ll \sumw_{N_{12}(z) \sim N} \bigg[ \frac{s}{r}\bigg]\bigg[ \frac{r+\omega s}{q}\bigg].
\end{align*}
Shifting $s$ by $\omega r$ to get $[(r+\omega(s+r\omega))/q]=[\omega/q][s/q],$ we see by Lemma \ref{fixinglemma} that
\begin{align*}
\K_w(N) \ll \sum_{N_3(r)\, \ll\, 2\sqrt{N}} \bigg| \sum_{\substack{s \in I(r)\\ s \equiv \omega r \, (3)}} \bigg[ \frac{s}{rq}\bigg] \bigg|,
\end{align*}
where $I(r)$ is the domain in $\C$ defined by the conditions 
\begin{align*}
|r+i(s-\omega r)||r-i(s-\omega r)| \, &\sim \, N^{1/2}  \quad \quad  \quad \quad \text{and}\\
|r-i(s-\omega r)| \leq |r+i(s-\omega r)| &< |r-i(s-\omega r)| |\epsilon_0|^{12}
\end{align*}
(so that $I(r)$ is contained in the annulus $|s-r(i-\omega)| \, \asymp N^{1/4}$). If $rq$ is a perfect cube we use the trivial bound. In the remaining part we use a smooth finer-than-dyadic decomposition.
Let $K >1$ be a parameter to be optimized later. There exists a smooth partition of unity
\[
\sum_{n \in \Z} F(x-n) = 1 \quad \text{for all} \quad x \in \R
\]
for a certain fixed compactly supported $C^\infty$-smooth function $F$. By scaling and squaring we get a smooth partition
\[
\sum_{(n_1,n_2) \in \Z^2} F\bigg( \frac{x - n_1 K}{K}\bigg)F\bigg( \frac{y - n_2 K}{K}\bigg) = 1 \quad \text{for all} \quad (x,y) \in \R^2.
\].

Using this we can
 to partition $I(r)$ into smoothed boxes of side-length $ \asymp K$, obtaining $\ll N^{1/2}/K^2$ such boxes weighted with functions $G_K$ as in Lemma \ref{pvlemma}. On the boundary of $I(r)$ we use the trivial bound $K^2$ and the fact that there are $\ll N^{1/4}/K$ boxes that intersect with the boundary, so that
\begin{align*}
\K_w(N) \ll  \sum_{\substack{N_3(r) \leq 2\sqrt{N} \\ rq=t^3}}\sqrt{N}+\sum_{N_3(r) \leq 2\sqrt{N}} N^{1/4} K + \sum_{\substack{N_3(r) \leq 2\sqrt{N} \\ rq \neq t^3}}\frac{N^{1/2}}{K^2}\max_{G_K} \bigg| \sum_{s \equiv \omega r \, (3) } G_K(s) \bigg[ \frac{s}{rq}\bigg] \bigg|
\end{align*}
for any $K \ll N^{1/4}.$ By Lemma \ref{pvlemma} we have
\begin{align*}
\sum_{s \equiv \omega r \, (3) } G_K(s) \bigg[ \frac{s}{rq}\bigg] \pprec \sqrt{N_{3}(rq)} 
\end{align*}
assuming that $rq$ is not a perfect cube. Hence,
\begin{align*}
\K_w(N)& \pprec \sum_{\substack{N_3(r) \leq 2\sqrt{N} \\ rq=t^3}}\sqrt{N}+ \sum_{N_3(r) \leq 2\sqrt{N}} N^{1/4} K + \sum_{N_3(r) \leq 2\sqrt{N}}\frac{N^{1/2}}{K^2} \sqrt{N_{3}(rq)}  \\
&\pprec \sum_{\substack{n \leq 2\sqrt{N} \\ nm=t^3}}\sqrt{N}  + N^{3/4} K +  \frac{N^{5/4} N_{3}(rq)^{1/2}}{K^2} \pprec N^{2/3} + N^{3/4} K +  \frac{N^{5/4} N_{3}(q)^{1/2}}{K^2} .
\end{align*}
Choosing $K=N^{1/6} N_3(q)^{1/6} \ll N^{1/4}$ to optimize the bound we get
\begin{align*}
\K_w(N)& \pprec N^{11/12} N_3(q)^{1/6}.
\end{align*}
Note that if $K=N^{1/6} N_3(q)^{1/6} \gg N^{1/4}$ then this bound is trivial.
\qed

\section{Type II sums} \label{typeiisection}
The goal of this section is to prove Proposition \ref{typeiiprop}. It turns out that the arguments in \cite[Section 21]{FI} generalize to our case essentially verbatim. Our slightly better exponent is thanks to a small technical refinement (using a smooth weight) which is actually necessary in our situation.

By using Lemma \ref{twistmultiplemma} and the notation of Section \ref{fixingsection}, the claim is reduced to bounding (absorbing $[z]$ and $[w]$ into the coefficients)
\begin{align*}
Q(M,N):= \sumw_{N_{12}(w) \sim M} \, \, \sumw_{N_{12}(z) \sim N} \alpha_w \beta_z \bigg(\frac{z}{w} \bigg),
\end{align*}
where $\alpha_w$ and $\beta_z$ are supported on primitive numbers. First we require the following preliminary bound for $Q(M,N)$ (compare to \cite[Lemma 21.2]{FI}).
\begin{lemma} \label{prelimtypeiilemma} We have
\begin{align*}
Q(M,N) \pprec M^{5} +M^{1/2} N.
\end{align*}
\end{lemma}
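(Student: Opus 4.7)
My plan is to follow the Friedlander--Iwaniec approach \cite[Section 21]{FI}, adapted to the cubic setting via the twisted multiplicativity in Lemma \ref{symbolmultiplemma} and the Polya--Vinogradov bound in Lemma \ref{pvlemma}. First I would apply Cauchy--Schwarz in the $z$-variable to obtain
\begin{align*}
|Q(M,N)|^2 \leq \bigg(\sumw_{N_{12}(z)\sim N} |\beta_z|^2\bigg)\sum_{w_1,w_2}\alpha_{w_1}\overline{\alpha_{w_2}}\,T(w_1,w_2),
\end{align*}
where $T(w_1,w_2) := \sumw_{N_{12}(z) \sim N}(z/w_1)\overline{(z/w_2)}$. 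The first factor is $\pprec N$ by divisor-boundedness, and by Lemma \ref{symbolmultiplemma} the summand of $T$ factors as $[(r-\omega s)/d]\cdot[(r+\omega s)/(q_1q_2^2/d)]$, where $q_j=N_{12/3}(w_j)$ and $d=N_{12/3}((w_1,\sigma(w_2^2)))$.

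I would then split the pairs $(w_1,w_2)$ into \emph{trivial} pairs, those for which the combined character is identically one (equivalently both $d$ and $q_1q_2^2/d$ are cubes in $\Z[\zeta_3]$), and \emph{non-trivial} pairs. A unique-factorisation argument in $\Z[\zeta_3]$ shows that for each $w_2$ there are only $\pprec 1$ primary primitive $w_1$ of norm $\sim M$ producing a trivial pair (essentially $q_1$ must equal $q_2$ up to units times a cube), so the total number of trivial pairs is $\pprec M$, contributing $\pprec MN$ to the bilinear sum. For the $\leq M^2$ non-trivial pairs I would smooth the sharp condition $N_{12}(z)\sim N$ by a smooth partition of unity (analogous to the treatment in Section \ref{typeisection}), then on each smooth box apply the substitution $(r,s)\mapsto(u,v)=(r-\omega s,r+\omega s)$, which decouples the combined character into a product of two one-dimensional cubic characters. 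Invoking Lemma \ref{pvlemma} separately in the $u$- and $v$-variable then yields
\[T(w_1,w_2) \pprec \sqrt{N_3(d)}\cdot\sqrt{N_3(q_1q_2^2/d)} = \sqrt{N_3(q_1q_2^2)} \asymp M^{3/2},\]
so the non-trivial contribution is $\pprec M^{7/2}$.

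Combining the two contributions,
\begin{align*}
|Q(M,N)|^2 \pprec N\,(MN + M^{7/2}) = MN^2 + NM^{7/2},
\end{align*}
which gives $|Q(M,N)|\pprec M^{1/2}N + M^{7/4}N^{1/2}$. A short case analysis---the second term is $\leq M^5$ whenever $N\leq M^{13/2}$ and is $\leq M^{1/2}N$ whenever $N\geq M^{5/2}$, and these two ranges cover every $N\geq 1$ since $\tfrac{5}{2}<\tfrac{13}{2}$---yields the stated bound $Q(M,N)\pprec M^5 + M^{1/2}N$.

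The main obstacle is establishing $T(w_1,w_2)\pprec M^{3/2}$ uniformly for non-trivial pairs: a direct Cauchy--Schwarz on the Poisson expansion of the sharp sum loses an extra factor of $N^{1/2}$, which would spoil the final bound precisely in the transition range $M^{13/3}<N<M^5$. The smoothing step is therefore essential, but it is technically cleaner here than in the Type I case, because after the $(u,v)$-substitution Lemma \ref{pvlemma} can be applied directly in each variable and the delicate optimisation over smooth-box scales seen in Proposition \ref{linearsumprop} is avoided.
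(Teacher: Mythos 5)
Your opening moves---Cauchy--Schwarz in $z$, factoring via Lemma \ref{symbolmultiplemma}, and splitting pairs $(w_1,w_2)$ into those where both $d$ and $q_1q_2^2/d$ are cubes (contributing $\pprec M$ pairs hence $\pprec MN$) versus the rest---match the paper's. But the paper does \emph{not} attempt to bound $T(w_1,w_2)$ for non-trivial pairs by an incomplete character-sum estimate: it simply assumes $N>M^{4+\eta}$, the complementary range being handled by the trivial bound $Q\pprec MN\leq M^{5+\eta}$. Under $N>M^{4+\eta}$ the truncation parameter $H\asymp N_3(3q)^{1/2}/N^{1/4}$ in Lemma \ref{poissonlemma} drops below $1$, so only the main term of the Poisson expansion survives, and orthogonality of the complete character sum mod $3q$ annihilates it for non-trivial pairs. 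No P\'olya--Vinogradov estimate is needed, and the term $M^5$ in the statement is literally the trivial bound near $N\asymp M^4$.

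Your alternative route would deliver the sharper intermediate bound $Q\pprec M^{1/2}N+M^{7/4}N^{1/2}$ (and your case analysis reducing this to $M^5+M^{1/2}N$ is correct), but the step asserting $T(w_1,w_2)\pprec M^{3/2}$ has a gap. The map $(r,s)\mapsto(u,v)=(r-\omega s,\,r+\omega s)$ has determinant $2\omega$, and since $|\omega|$ can be of size $N_3(q)\asymp M^2$, its image has enormous index in $\Z[\zeta_3]^2$: it is not a change of variables on the lattice, and the preimage of a product of boxes in $(u,v)$ is not a product of boxes in $(r,s)$. So one cannot ``apply Lemma \ref{pvlemma} separately in the $u$- and $v$-variable''---the lemma bounds a smooth sum over a full lattice $\Z[\zeta_3]$ in a box, which is not what you have after substituting. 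The substitution only becomes a bijection after passing to the complete residue system modulo $3q$, and even then requires $(2\omega,3q)=1$, which is not automatic (one has to deal separately with the possibility $2\mid q$, since $2$ is inert in $\Z[\zeta_3]$ but ramifies in $\Z[\zeta_{12}]$). The correct version of your idea is: Poisson-expand in $r$ and $s$ first (as in Lemma \ref{poissonlemma}), obtain a genuine two-dimensional Gauss sum over $(\Z[\zeta_3]/3q\Z[\zeta_3])^2$, factor \emph{that complete sum} via the substitution, and then analyze carefully which frequency pairs $(h_r,h_s)$ yield non-vanishing Gauss sums, tracking conductors. That analysis should indeed give $\pprec\sqrt{N_3(q_1q_2^2)}\asymp M^{3/2}$, but it is substantially more work than the paper's argument and is not what your sketch does; if all you want is $M^5+M^{1/2}N$, the trivial bound for $N\leq M^{4}$ is far cheaper than closing this gap.
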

\begin{proof}
We may assume that $N>  M^{4+\eta}$ for some small $\eta>0$, since otherwise by a trivial bound
\begin{align*}
Q(M,N) \pprec MN \leq M^{5+\eta} .
\end{align*}

By Cauchy-Schwarz and by Lemma \ref{fixinglemma} we have
\begin{align*}
Q(M,N) &\pprec  N^{1/2} \bigg( \sumw_{N_{12}(z) \sim N}  \bigg| \sumw_{N_{12}(w) \sim M}\alpha_w \bigg(\frac{z}{w} \bigg) \bigg|^2 \bigg)^{1/2} \\ 
&\leq N^{1/2} \bigg( \sumw_{w_1,w_2} \alpha_{w_1} \overline{\alpha}_{w_2}  \sum_{\substack{z=r+is \\ r \equiv 1 \, (3), \, 3|s}} F_{\sqrt{N}}(r) F_{\sqrt{N}}(s) \bigg(\frac{z}{w_1} \bigg) \bigg(\frac{z}{w_2} \bigg)^2 \bigg)^{1/2} 
\end{align*}
where  $F_{\sqrt{N}}(r)= F(N_3(r)/\sqrt{N})$ for a fixed compactly supported $C^\infty$-smooth function $F$ so that $F_{\sqrt{N}}(r) F_{\sqrt{N}}(s)$ is a smooth majorant for the original summation range. Let $q:= N_{12/3}(w_1w_2) = u^2+v^2$ with $(u,v)=1$. Note that $(3,q)=1$ since $-1$ is not a square in $\Z[\zeta_3]/(1-\zeta_3)\Z[\zeta_3].$  Splitting the sum over $z$ into residue classes modulo $3q$ we get for some constant $c_0$
\begin{align*}
\sum_{\substack{z=r+is \\ r \equiv 1 \, (3), \, 3|s}} & F_{\sqrt{N}}(r) F_{\sqrt{N}}(s) \bigg(\frac{z}{w_1} \bigg) \bigg(\frac{z}{w_2} \bigg)^2  \\
&= \sum_{\substack{\zeta = r_0+i s_0 \in \Z[\zeta_{12}]/3q\Z[\zeta_{12}] \\ r_0 \equiv 1 \, (3), \,3| s_0}} \bigg(\frac{\zeta}{w_1} \bigg) \bigg(\frac{\zeta}{w_2} \bigg) ^2 \sum_{\substack{r,s \in \Z[\zeta_3]  \\ (r,s) \equiv (r_0,s_0) \,\, (3q)} }  F_{\sqrt{N}}(r) F_{\sqrt{N}}(s) \\ &= \bigg(\frac{c^2_0 N}{N_{3}(3q)^2} +O_C\bigg(X^{- C}\bigg)\bigg)\sum_{\substack{\zeta = r_0+i s_0 \in \Z[\zeta_{12}]/3q\Z[\zeta_{12}] \\ r_0 \equiv 1 \, (3), \,3| s_0}} \bigg(\frac{\zeta}{w_1} \bigg) \bigg(\frac{\zeta}{w_2} \bigg) ^2
\end{align*}
by Lemma \ref{poissonlemma} since $N_3(q) \ll M^2 < X^{-\eta} \sqrt{N}$. Recall the notation of Lemma \ref{symbolmultiplemma}, that is,
\begin{align*}
q_j= N_{12/3}(w_j), \quad e:=(w_1,\sigma(w_2^2)),\quad \text{and} \quad d:= N_{12/3}(e),
\end{align*}
where $\sigma$ is the conjugation $\sigma(a+ib)=a-ib$. 
 By Lemma \ref{symbolmultiplemma} we have (writing $x:=r_0-\omega s_0$, $y:= r_0+\omega s_0$)
\begin{align*}
 \sum_{\substack{\zeta = r_0+i s_0 \in \Z[\zeta_{12}]/3q\Z[\zeta_{12}] \\ r_0 \equiv 1 \, (3), \,3| s_0}}  & \bigg(\frac{\zeta}{w_1} \bigg) \bigg(\frac{\zeta}{w_2} \bigg) ^2 = \sum_{\substack{r_0,s_0 \in \Z[\zeta_{3}]/3q\Z[\zeta_{3}] \\ r_0 \equiv 1 \, (3), \,3| s_0}} \bigg[\frac{r_0-\omega s_0}{d} \bigg] \bigg[\frac{r_0+\omega s_0}{q_1q_2^2/d} \bigg] \\
&= \sum_{\substack{x \in \Z[\zeta_{3}]/3q\Z[\zeta_{3}] \\ x \equiv 1 \, (3)}}\bigg[\frac{x}{d}\bigg] \sum_{\substack{y \in \Z[\zeta_{3}]/3q\Z[\zeta_{3}] \\ y \equiv 1 \, (3)}} \bigg[\frac{y}{q_1q_2^2/d} \bigg]  \\
&= N_3(q)\sum_{\substack{x \in \Z[\zeta_{3}]/3d\Z[\zeta_{3}] \\ x \equiv 1\, (3)}}\bigg[\frac{x}{d}\bigg] \sum_{\substack{y \in \Z[\zeta_{3}]/(3q/d)\Z[\zeta_{3}] \\ y \equiv 1 \, (3)}} \bigg[\frac{y}{q_1q_2^2/d} \bigg] = 0
\end{align*}
unless both of $d$ and $q_1q_2^2/d$ are perfect cubes in which case we get
\begin{align*}
N_3(q) \varphi(N_3(d))\varphi(N_3(q/d)) \leq N_{3}(q)^2.
\end{align*} 
To see this, recall that $(3,q)=1$ and note that $[y/(q_1q_2^2/d)]$ is a cubic character modulo $q/d$. Hence, we get
\begin{align*}
Q(M,N)^2 \pprec N^2 \sum_{\substack{q_1,q_2 \in \Z[\zeta_{3}] \\ q_1q_2^2 = t^3 \\ N_3(q_1), N_3(q_2) \leq 2M }}1.
\end{align*}
We have
\begin{align*}
\sum_{\substack{q_1,q_2 \in \Z[\zeta_{3}] \\ q_1q_2^2 = t^3 \\ N_3(q_1), N_3(q_2) \leq 2M }} 1 \pprec \sum_{\substack{m_1, m_2 \leq 2M \\ m_1m_2^2=c^3}} 1 = \sum_{d \leq 2M} \sum_{\substack{m_1,m_2 \leq 2M/d \\ (m_1,m_2)=1 \\ m_1=c_1^3, \, m_2=c_2^3}} 1 \ll M^{2/3} \sum_{d \leq 2M} d^{-2/3} \ll M,
\end{align*}
so that for $N> X^\eta M^4$
\begin{align*}
Q(M,N) \pprec M^{1/2} N .
\end{align*}
\end{proof}
\begin{remark} Note that the modulus $q_1q_2^2$ being a cube is morally the same as $q_1 q_2$ being a square which explains why we get the same term $M^{1/2}N$ as in \cite[Lemma 21.2]{FI}.
\end{remark}
Lemma \ref{prelimtypeiilemma} is non-trivial as soon as $N \gg M^4 \gg 1$. Similarly as in \cite{FI}, we now use H\"older's inequality to extend this range so that we can handle the range $M \asymp N$. We get
\begin{align*}
Q^k(M,N) \pprec M^{k-1} \sumw_{w} \bigg| \sumw_z \beta_z \bigg( \frac{z}{w}\bigg)\bigg|^k =M^{k-1} \tilde{Q}(M,N^k),
\end{align*}
where $\tilde{Q}$ is of  similar form as $Q$ except that $\alpha_w$ is replaced by some coefficients $\tilde{\alpha}_w$ with $|\tilde{\alpha}_w| \leq 1$ and $\beta_z$ is replaced by the divisor bounded coefficient
\begin{align*}
\tilde{\beta}_z = \sumw_{z_1\cdots z_k=z} \beta_{z_1} \cdots \beta_{z_k}
\end{align*}
where now each $z_1$ satisfies the conditions of Lemma \ref{fixinglemma} separately. Applying Lemma \ref{prelimtypeiilemma} to $\tilde{Q}(M,N^k)$ we get
\begin{align*}
Q(M,N) \pprec M^{1+4/k} + M^{1-1/2k} N.
\end{align*}
Similarly as in \cite{FI}, we will use reciprocity below to get a symmetric bound. Hence, the most difficult range will be $M=N$. To optimize the bound we choose $k=5$  to get
\begin{align*}
Q(M,N) \pprec M^{9/5} + M^{9/10}N \pprec M^{9/10}N
\end{align*}
if $M \leq N$.  Since by Lemma \ref{twistmultiplemma} $(z/w)=(w/z)$ for primitive primary $z$ and $w$, the form $Q(M,N)$ is symmetric and we get
\begin{align*}
Q(M,N) \pprec MN^{9/10} + M^{9/10} N.
\end{align*} \qed
\begin{remark} The reason our bound is superior to that in \cite[Proposition 21.3]{FI} is that we used the smooth function $F_{\sqrt{N}}(r)$ instead of a sharp cut-off $N_3(r)\leq \sqrt{N}$ in the proof of Lemma \ref{prelimtypeiilemma}. This allows us to improve the term $M^2N^{3/4}$ appearing in Lemma \cite[Lemma 21.2]{FI} to $M^5$. Obviously the same refinement can be implemented to improve their bound to the same form.
\end{remark}

\section{Connection to primes of the form $\alpha^2+\beta^6$ on $\Z[\zeta_3]$} \label{primessection}
It is tempting to ask if the method of Friedlander and Iwaniec can be extended to produce primes of the form $a^2+b^6$ on $\Z$ (cf. \cite[Remarque 4.20]{michel}). Unfortunately there seems to be two large obstacles to this. Firstly, the sequence is too sparse for replicating the steps in \cite[Sections 5-9]{FI}. The second problem is structural -- the proofs in \cite{FI} rely on the law of quadratic reciprocity in multiple  places, while for cubic residues we do not have a suitable reciprocity law on $\Z$. To mend this we need to transfer the whole set-up to $\Z[\zeta_3]$. Unfortunately the first problem persist (cf. the paragraph around (9.4) below).

In this section we explain how the sum in Theorem \ref{maintheorem} arises if we consider primes of the form $\alpha^2+\beta^6$ on $\Z[\zeta_3]$, which was the original motivation for this manuscript. All of the discussion presented here is non-rigorous and for the sake of illustration we omit all of the technical issues that arise in \cite{FI}. The argument follows exactly the same lines as in \cite{FI}.

Note that if $z \in \Z[\zeta_3]$ is the sum of two squares, it has infinitely many such representations. To make the analogy with the Friedlander-Iwaniec Theorem precise we consider primary primes $\pi=\beta^3+i\alpha$ on $\Z[\zeta_{12}]$ with the restriction 
\begin{align} \label{genrestriction}
N_{12}(\beta^3+i\alpha)^{1/4} \leq |\beta^3+i\alpha| < N_{12}(\beta^3+i\alpha)^{1/4} |\epsilon_0|^6
\end{align}
as in Section \ref{fixingsection}. Note that this implies that $|\alpha| \ll x^{1/4}$ and  $|\beta| \, \ll x^{1/12}$ if $N_{12}(\pi) \ll x$.

The number of ideals $\a$ with $N_{12} \a \leq x$ that have such a generator $\beta^3+i\alpha$ is 
\begin{align*}
\sum_{N_{12} \a \leq x} \,\,\sumw_{\substack{y=\beta^3+i\alpha \\ (y)=\a}} 1 \asymp x^{2/3},
\end{align*}
which is very sparse (the inability of the Friedlander-Iwaniec method to handle sets of density $<x^{-1/3}$ is also noted by Helfgott \cite{helfgott}). Due to this we are not able to handle the Type II sums, as we shall see soon below (see (\ref{lengthproblem})). However, it should be possible to obtain an approximation to this problem by considering primes of the form $\alpha^2+\lambda^2 \beta^6$ where $\lambda$ runs over elements of small norm $N_{3}(\lambda) \leq x^{\delta}$, to show that for some fairly small $\delta>0$ we get a lower bound of the correct order of magnitude for the number of such primes. For this the sieve of Friedlander and Iwaniec needs to be replaced by Harman's sieve (see  \cite{maynard} for a version of this on Number fields).

\begin{remark}
Restricting to generators satisfying (\ref{genrestriction}) does not decrease the density essentially. Most of the integers $\beta^3+i\alpha \in \Z[\zeta_{12}]$ with $|\alpha^2 + \beta^6|^2 \leq x$ come from the part $|\alpha| \ll x^{1/4}$ and $|\beta| \ll x^{1/12}$. To see this note that we have $|\beta^3+i\alpha||\beta^3-i\alpha| \leq x^{1/2}$. Suppose that $|\beta^3+i\alpha|$ and $|\beta^3-i\alpha|$ are not of similar size, say,  $|\beta^3-i\alpha| \asymp Y$ for some $1 \ll Y \ll x^{1/4}$. For any given $\beta$ there are roughly $Y^2$ choices of $\alpha$ that satisfy $|\beta^3-i\alpha| \asymp Y$. But then $|\beta^3+i\alpha| \ll x^{1/2}/Y$ is morally the same as $|\beta| \ll x^{1/6}/Y^{1/3}$, so that there are roughly $x^{1/3}/Y^{2/3}$ choices for $\beta$ and we get
\begin{align*}
|\{\alpha,\beta \in \Z[\zeta_3]: |\alpha^2 + \beta^6|^2 \leq x, \, |\beta^3-i\alpha| \asymp Y \}| \asymp Y^2 \cdot x^{1/3}/Y^{2/3} = x^{1/3} Y^{4/3},
\end{align*}
which is much less than $x^{2/3}$ if $Y$ is much smaller than $x^{1/4}$. Heuristically the same holds also for $Y <1$, since the probability of finding a lattice point $i\alpha$ near $\beta^3$ is proportional to the area $Y^2$. Thus, we cannot increase the density by considering all generators $\beta^3+i\alpha$ instead of just those satisfying (\ref{genrestriction}).
\end{remark}

Using a sieve argument the main problem is to handle Type II sums of the form
\begin{align*}
S_1 := \sum_{N_{12} \m \sim M} \alpha_\m \sum_{N_{12} \n \sim N} \beta_\n \,\,\sumw_{\substack{y=\beta^3+i\alpha \\ (y)=\m\n}} 1,
\end{align*}
where $MN=x$, and $\alpha$ and $\beta$ are bounded coefficients with $\beta$ behaving like a M\"obius function in terms of a Siegel-Walfisz type condition. The goal then is to show that $S_1 \ll_C x^{2/3} \log^{-C} x$.

We now pick a primary generator  $z$ of $\n$ according to Section \ref{fixingsection}, and then pick a generator $\sigma(w)$ of $\m$ such that $y=\sigma(w) z$. The cross-condition
\begin{align} \label{ccwz}
N_{12}(w)^{1/4} N_{12}(z)^{1/4}  \leq |z||w| < N_{12}(w)^{1/4} N_{12}(z)^{1/4}  |\epsilon_0|^6
\end{align}
is easily removed by Perron's formula, so that we are essentially left with
\begin{align*}
S_2 := \sumw_{N_{12} (w )\sim M} \alpha_{\sigma(w)} \sumw_{N_{12}( z )\sim N} \beta_{z} \,\,\sum_{\substack{\sigma(w) z=\beta^3+i\alpha }} 1
\end{align*}
(note that (\ref{ccwz}) together with $|z| \asymp N_{12}(z)^{1/4}$ implies $|w| \asymp N_{12}(w)^{1/4}$ which is morally same as the  condition implied by the $\wedge$ in the sum over $w$).

Applying the Cauchy-Schwarz inequality similarly as in \cite{FI} we get $S_2 \ll M^{1/2} S_3^{1/2}$ for
\begin{align*}
S_{3} := \sumw_{z_1, z_2} \beta_{z_1} \overline{\beta}_{z_2} \sum_{w} \sum_{\substack{\sigma(w) z_1=\beta_1^3+i\alpha_1 \\  \sigma(w) z_2=\beta_2^3+i\alpha_2}} F_M(w)
\end{align*}
for some suitable smooth function $F_M$ supported on $N_{12}(w) \asymp M$ and $|w| \asymp M^{1/4}$.

From the diagonal part $z_1=z_2$ we get a contribution $(MN)^{2/3}$, which is sufficient for $S_1 \ll x^{2/3} \log^{-C} x$ as long as 
\begin{align} \label{diagonal}
N \gg x^{1/3} \log^C x.
\end{align}

In the off-diagonal the generic case is $(z_1,z_2)=1$, and by the same argument as in \cite[Section 6]{FI} we have
\begin{align*}
i \Delta w = \beta_1^3 z_2 - \beta_2^3 z_1,
\end{align*}
where $z_j =r_j+is_j$ and $\Delta := r_1 s_2-r_2 s_1$. Thus, the off-diagonal part is
\begin{align*}
S_4 &:= \sumw_{(z_1, z_2)=1} \beta_{z_1} \overline{\beta}_{z_2} \sum_{\beta_1^3 z_2 \equiv \beta_2^3 z_1 \,\, (\Delta)} F_M(( \beta_1^3 z_2 - \beta_2^3 z_1)/i\Delta) \\
&= \sumw_{(z_1, z_2)=1} \beta_{z_1} \overline{\beta}_{z_2} \sum_{\substack{\gamma_1, \gamma_2 \,(\Delta) \\ \gamma_1^3 z_2 \equiv \gamma_2^3 z_1 \,\, (\Delta)}} \sum_{(\beta_1,\beta_2) \equiv (\gamma_1,\gamma_2) \, (\Delta)} F_M(( \beta_1^3 z_2 - \beta_2^3 z_1)/i\Delta).
\end{align*}
Similarly to \cite{FI} we note that the congruence $\beta_1^3 z_2 \equiv \beta_2^3 z_1 \,\, (\Delta)$ is in fact a $\Z[\zeta_3]$-rational congruence, since
\begin{align*}
z_2 z_1^{-1} \equiv \frac{r_1r_2+s_1s_2}{r_1^2+s_1^2} \,\, (\Delta).
\end{align*}

Unfortunately here we run into a problem with the sparseness of our sequence (note that also in \cite{FI} for the Type II sums this is the only part of the argument affected by the sparsity). We would like to apply Poisson summation (Lemma \ref{poissonlemma}) to evaluate the smoothed sum over $\beta_1,\beta_2$. However, due to the diagonal contribution (\ref{diagonal}) we have essentially
\begin{align*}
N_{3}(\Delta) \approx N \gg x^{1/3} \log^C x
\end{align*}
while the length of the sum is
\begin{align} \label{lengthproblem}
N_{3}(\beta_j) \ll x^{1/6}
\end{align}
which is just narrowly too short (Poisson summation becomes ineffective if the length of the sum is less than square root of the size of the modulus). Due to this we are not able to evaluate the sum in any range of $N$. If we consider the aforementioned approximation version of the problem with $\alpha^2+\lambda^2 \beta^6$ for $N_{3}(\lambda) \leq x^{\delta}$, then the diagonal part gives a restriction $N \gg x^{1/3-2\delta/3} \log^C x$ which gives some room to work with. We have been able to evaluate the sum over  $\beta_1^3 \lambda_1 z_2 \equiv \beta_2^3 \lambda_2 z_1 \,\, (\Delta)$ in some ranges using a large sieve argument similar to that in Heath-Brown and Li \cite{hbli} (although the argument required here is much more intricate).

\begin{remark} Since the argument falls short barely, there is some hope that with a delicate estimate we could handle Type II sums in some very narrow non-trivial range for $N=x^{1/3+o(1)}$. This would suffice to break the parity barrier, that is, to show that there are infinitely many of $\alpha^2+\beta^6$ are a product of exactly two primes (one of size $M$ and the other of size $N$).
\end{remark}

Assuming that the sum over $\beta_1,\beta_2$ could be computed, then the main term is essentially (up to a multiplicative factor and a smooth coefficient, and ignoring the fact that $\gamma_j$ may have common factors with $\Delta$)
\begin{align*}
S_5:= \sumw_{(z_1, z_2)=1} \beta_{z_1} \overline{\beta}_{z_2} \sum_{\substack{(\omega,\Delta)=1 \\\omega^3 \equiv z_2 z_1^{-1} \, (\Delta)}} 1
\end{align*}
(for the approximate version of the problem the congruence is $\omega^3 \equiv z_2 \lambda_1 z_1^{-1} \lambda_2^{-1} \, (\Delta)$, which is morally the same). Here we need to show only a little bit of cancellation, that is, $S_5 \ll N^2 \log^{-C}x$. To evaluate the sum over cubic roots we make use of the Chinese Reminder Theorem and the cubic residue character to get (assuming $\Delta$ is square-free, primitive, and ignoring the fact that $3|\Delta$)
\begin{align*}
\sum_{\substack{(\omega,\Delta)=1 \\\omega^3 \equiv z_2 z_1^{-1} \, (\Delta)}} 1 &= \prod_{\pi | \Delta} \bigg( 1+ \bigg[ \frac{z_2 z_1^{-1}}{\pi} \bigg] +  \bigg[ \frac{z_2 z_1^{-1}}{\pi} \bigg]^2 \bigg) \\
&=\sum_{\delta_1 \delta_2 | \Delta}  \bigg[\frac{z_2 z_1^{-1}}{\delta_1} \bigg] \bigg[\frac{z_2 z_1^{-1}}{\delta_2} \bigg]^2.
\end{align*}
Thus, we essentially get (compare to $T (\beta)$ in \cite[Section 10]{FI})
\begin{align*}
S_5 = \sum_{\delta_1,\delta_2} \,\, \sumw_{\substack{(z_1, z_2)=1 \\ \Delta \equiv 0 \,(\delta_1 \delta_2)}} \beta_{z_1} \overline{\beta}_{z_2} \bigg[\frac{z_2 z_1^{-1}}{\delta_1} \bigg] \bigg[\frac{z_2 z_1^{-1}}{\delta_2} \bigg]^2.
\end{align*}
Similarly as in \cite[Section 10]{FI}, we now split the sum into three parts $U+V+W$ according to the size of $\delta_1\delta_2$, where in $U$ we have $N_3(\delta_1\delta_2) \ll \log^C N$, in $W$ we have $N_{3}(\delta_1 \delta_2) \gg N_{3}(\Delta) \log^{-C}$, and $V$ is the remaining middle part. 

For $U$ we get the required cancellation from the $\beta_{z}$, which look like a M\"obius function, by using a suitable Siegel-Walfisz type bound.

For $V$ we have not checked in detail but we expect that the large sieve -type arguments in \cite[Sections 11-15]{FI} generalize to our case.

For $W$ the generic case is $\delta_1 \delta_2 = \Delta$. To handle this we need the following analogue of \cite[Lemma 17.1]{FI}, which we will prove at the end of this section.
\begin{lemma} \label{splittinglemma} For $z_1,z_2, \Delta=r_1s_2-r_2s_1$ with $z_1,z_2$ primary and $z_1\equiv z_2 \,\, (9)$ we have
\begin{align*}
\bigg[\frac{z_2 z_1^{-1}}{\Delta} \bigg] = \bigg[\frac{s_1}{r_1}\bigg]^2 \bigg[\frac{s_2}{r_2}\bigg].
\end{align*}
\end{lemma}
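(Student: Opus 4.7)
The plan is to reduce the left-hand side to cubic symbols living entirely on $\Z[\zeta_3]$, then apply cubic reciprocity (Lemma \ref{cubicreciprocitylemma}); the hypothesis $z_1 \equiv z_2 \,(9)$ will enter in exactly one place, precisely where it is needed to kill the supplementary corrections coming from the non-primary part of $\Delta$.

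First I would compute $z_2 \overline{z_1} = A + i\Delta$, where $A := r_1 r_2 + s_1 s_2$. Since $z_1 \overline{z_1} = r_1^2+s_1^2$, this gives $z_2 z_1^{-1} \equiv A (r_1^2+s_1^2)^{-1} \,(\Delta)$, an element of $\Z[\zeta_3]/\Delta$. The algebraic identity $A r_1 - r_2(r_1^2+s_1^2) = s_1 \Delta$ then yields $Ar_1 \equiv r_2(r_1^2+s_1^2) \,(\Delta)$, so multiplicativity of the cubic symbol collapses the expression down to
\[
\bigg[\frac{z_2 z_1^{-1}}{\Delta}\bigg] = \bigg[\frac{A}{\Delta}\bigg] \bigg[\frac{r_1^2+s_1^2}{\Delta}\bigg]^{2} = \bigg[\frac{r_2}{\Delta}\bigg] \bigg[\frac{r_1}{\Delta}\bigg]^{2}.
\]

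Next I would flip each symbol by reciprocity. Since $z_j$ is primary we have $s_j \equiv 0 \,(3)$, so $3 \mid \Delta$; the stronger hypothesis $z_1 \equiv z_2 \,(9)$ improves this to $9 \mid \Delta$. Write $\Delta = \pm \zeta_3^k (1-\zeta_3)^\ell \Delta^*$ with $\Delta^*$ primary. Because $[r_j/\zeta_3] = [r_j/(1-\zeta_3)] = 1$, reciprocity together with multiplicativity gives
\[
\bigg[\frac{r_j}{\Delta}\bigg] = \bigg[\frac{r_j}{\Delta^*}\bigg] = \bigg[\frac{\Delta^*}{r_j}\bigg] = \bigg[\frac{\Delta}{r_j}\bigg] \cdot \epsilon(\zeta_3, r_j)^{-k} \epsilon(1-\zeta_3, r_j)^{-\ell}.
\]
This is the crucial step: by Lemma \ref{cubicreciprocitylemma} the factors $\epsilon(\zeta_3, r)$ and $\epsilon(1-\zeta_3, r)$ depend only on $r \,(9)$, and the assumption $r_1 \equiv r_2 \,(9)$ forces them to agree for $j=1,2$. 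Raising the $j=1$ relation to the square and multiplying by the $j=2$ relation, the correction picks up total exponents $-3k$ and $-3\ell$ on cube roots of unity, so it is $1$, and we conclude
\[
\bigg[\frac{r_1}{\Delta}\bigg]^2 \bigg[\frac{r_2}{\Delta}\bigg] = \bigg[\frac{\Delta}{r_1}\bigg]^2 \bigg[\frac{\Delta}{r_2}\bigg].
\]

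The rest is direct: $\Delta \equiv -r_2 s_1 \,(r_1)$ together with $[-1/r_1] = 1$ gives $[\Delta/r_1] = [r_2/r_1][s_1/r_1]$, and symmetrically $[\Delta/r_2] = [r_1/r_2][s_2/r_2]$. Substituting, the prefactor $[r_2/r_1]^{2}[r_1/r_2]$ becomes $[r_2/r_1]^3 = 1$ after one more application of reciprocity between the primary coprime pair $r_1, r_2$, and what survives is exactly $[s_1/r_1]^{2}[s_2/r_2]$. The main difficulty in the whole argument is the bookkeeping of the unit and $(1-\zeta_3)$-parts of $\Delta$: the proof only goes through because the supplementary-law corrections match between $r_1$ and $r_2$ and are attached to exponents summing to $3$, so they collapse. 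This is exactly why the congruence must be taken modulo $9$ rather than $3$, since the supplementary laws in Lemma \ref{cubicreciprocitylemma} genuinely require the residue $\,(9)$. Coprimality of $r_1$ and $r_2$ (needed for the final reciprocity) should be arranged in the context in which the lemma is applied, using $(z_1,z_2)=1$ and the primitivity of $z_1,z_2$.
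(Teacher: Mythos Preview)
Your proof is correct and follows essentially the same route as the paper's: both reduce $[z_2 z_1^{-1}/\Delta]$ to $[r_1/\Delta]^2[r_2/\Delta]$ via the identity $Ar_1 - r_2(r_1^2+s_1^2)=s_1\Delta$, flip by cubic reciprocity using $r_1\equiv r_2\ (9)$ to cancel the supplementary corrections, and finish by reducing $\Delta$ modulo $r_1$ and $r_2$. The paper also restricts to the case $(r_1,r_2)=1$, so your caveat at the end matches; the only difference is that you spell out the $\epsilon(\zeta_3,\cdot)$ and $\epsilon(1-\zeta_3,\cdot)$ bookkeeping explicitly, whereas the paper just cites Lemma~\ref{cubicreciprocitylemma}.
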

To guarantee that $z_1 \equiv z_2 \, (9)$ we have to split $z$ into residue classes modulo 9 before the application of Cauchy-Schwarz.

Using this lemma the sum $W$ is essentially reduced to
\begin{align*}
S_6 &:=  \sumw_{\substack{(z_1, z_2)=1 }} \beta_{z_1} \overline{\beta}_{z_2} \sum_{\Delta=\delta_1\delta_2}  \bigg[\frac{z_2 z_1^{-1}}{\delta_1} \bigg] \bigg[\frac{z_2 z_1^{-1}}{\delta_2} \bigg]^2 \\
&= \sumw_{\substack{(z_1, z_2)=1 }} \beta_{z_1} \overline{\beta}_{z_2}  \bigg[\frac{z_2 z_1^{-1}}{\Delta} \bigg] \sum_{\delta_2 | \Delta}  \bigg[\frac{z_2 z_1^{-1}}{\delta_2} \bigg] \\
&= \sumw_{\substack{(z_1, z_2)=1 }} \beta_{z_1} \bigg[\frac{s_1}{r_1}\bigg]^2 \overline{\beta}_{z_2}   \bigg[\frac{s_2}{r_2}\bigg] \sum_{\delta_2 | \Delta}  \bigg[\frac{z_2 z_1^{-1}}{\delta_2} \bigg].
\end{align*}

 We now again partition the sum into three parts $W_1+W_2+W_3$ according to the size of $\delta_2$.

In $W_1$ the generic case is $\delta_2 =1$ and we get a sum
\begin{align*}
\sumw_{\substack{(z_1, z_2)=1 }} \beta_{z_1} \bigg[\frac{s_1}{r_1}\bigg]^2 \overline{\beta}_{z_2}   \bigg[\frac{s_2}{r_2}\bigg],
\end{align*}
which can be bounded using similar arguments as in the proof of Theorem \ref{maintheorem} (once we remove the condition $(z_1, z_2)=1$ either by M\"obius function or by the device in \cite{FI}), since the coefficients $\beta_z$ have a Type I/Type II decomposition.

Similarly for $W_3$ the generic case is $\delta_2= \Delta$ and we get by a second application of Lemma \ref{splittinglemma} a sum
\begin{align*}
\sumw_{\substack{(z_1, z_2)=1 }} \beta_{z_1} \bigg[\frac{s_1}{r_1}\bigg] \overline{\beta}_{z_2}   \bigg[\frac{s_2}{r_2}\bigg]^2
\end{align*}
which is of the same form as with $W_1$.

Finally for $W_2$ we note that this sum is essentially contained in the earlier sum $V$ but with the coefficients  $\beta_z$ twisted by $[z]$ or $[z]^2$, so that the same argument as with $V$ should take care of this part.

\emph{Proof of Lemma \ref{splittinglemma}.}
The argument is essentially the same as the proof of \cite[Lemma 17.1]{FI}. For simplicity we give the proof only in the case $(r_1,r_2)=1$ (in the general case we have to juggle back and forth with the factor $(r_1,r_2)$). Since $[r_1^3 / \Delta]=1$, we have
\begin{align*}
\bigg[\frac{z_2 z_1^{-1}}{\Delta} \bigg] =\bigg[\frac{(r_1^2+s_1^2)^{-1}(r_1r_2+s_1s_2)}{\Delta} \bigg]  =\bigg[\frac{(r_1^2+s_1^2)^{-1}(r_1r_2+s_1s_2)r_1^3}{\Delta} \bigg].
\end{align*}
We have
\begin{align*}
(r_1r_2+s_1s_2)r_1^3 - (r_1^2+s_2^2)r_1^2r_2 = r_1^2s_1(r_1s_2-r_2s_1) \equiv 0 \,\, (\Delta),
\end{align*}
so that
\begin{align*}
(r_1^2+s_1^2)^{-1}(r_1r_2+s_1s_2)r_1^3 \equiv r_1^2r_2 \, \,(\Delta).
\end{align*}
Hence,
\begin{align*}
\bigg[\frac{z_2 z_1^{-1}}{\Delta} \bigg]=\bigg[\frac{r_1^2r_2}{\Delta} \bigg] = \bigg[\frac{r_1}{\Delta} \bigg]^2\bigg[\frac{r_2}{\Delta} \bigg] = \bigg[\frac{\Delta}{r_1} \bigg]^2\bigg[\frac{\Delta}{r_2} \bigg]
\end{align*}
by the supplementary laws in Lemma \ref{cubicreciprocitylemma}, since $z_1 \equiv z_2 \,\, (9)$. Therefore,
\begin{align*}
\bigg[\frac{z_2 z_1^{-1}}{\Delta} \bigg] =\bigg[\frac{r_1s_2-r_2s_1}{r_1} \bigg]^2\bigg[\frac{r_1s_2-r_2s_1}{r_2} \bigg] = \bigg[\frac{-r_2s_1}{r_1} \bigg]^2\bigg[\frac{r_1s_2}{r_2} \bigg] = \bigg[\frac{s_1}{r_1} \bigg]^2\bigg[\frac{s_2}{r_2} \bigg],
\end{align*}
since by Lemma \ref{cubicreciprocitylemma} for $r_1$ and $r_2$ primary
\begin{align*}
\bigg[\frac{-r_2}{r_1} \bigg]^2\bigg[\frac{r_1}{r_2} \bigg]=\bigg[\frac{r_2}{r_1}  \bigg]^2\bigg[\frac{r_1}{r_2} \bigg]=\bigg[\frac{r_1}{r_2} \bigg]^3 =1.
\end{align*}
\qed

\bibliography{cubicspinbibl}

\begin{thebibliography}{10}

\bibitem{FI}
J.~Friedlander and H.~Iwaniec.
\newblock The polynomial {$X^2+Y^4$} captures its primes.
\newblock {\em Ann. of Math. (2)}, 148(3):945--1040, 1998.

\bibitem{fiillusory}
J.~B. Friedlander and H.~Iwaniec.
\newblock The illusory sieve.
\newblock {\em Int. J. Number Theory}, 1(4):459--494, 2005.

\bibitem{fidivisor}
J.~B. Friedlander and H.~Iwaniec.
\newblock A polynomial divisor problem.
\newblock {\em J. Reine Angew. Math.}, 601:109--137, 2006.

\bibitem{fimr}
J.~B. Friedlander, H.~Iwaniec, B.~Mazur, and K.~Rubin.
\newblock The spin of prime ideals.
\newblock {\em Invent. Math.}, 193(3):697--749, 2013.

\bibitem{harman}
G.~Harman.
\newblock {\em Prime-detecting sieves}, volume~33 of {\em London Mathematical
  Society Monographs Series}.
\newblock Princeton University Press, Princeton, NJ, 2007.

\bibitem{hb}
D.~R. Heath-Brown.
\newblock Primes represented by {$x^3+2y^3$}.
\newblock {\em Acta Math.}, 186(1):1--84, 2001.

\bibitem{hbli}
D.~R. Heath-Brown and X.~Li.
\newblock Prime values of {$a^2 + p^4$}.
\newblock {\em Invent. Math.}, 208(2):441--499, 2017.

\bibitem{hbm}
D.~R. Heath-Brown and B.~Z. Moroz.
\newblock Primes represented by binary cubic forms.
\newblock {\em Proc. London Math. Soc. (3)}, 84(2):257--288, 2002.

\bibitem{helfgott}
H.~A. Helfgott.
\newblock The parity problem for reducible cubic forms.
\newblock {\em J. London Math. Soc. (2)}, 73(2):415--435, 2006.

\bibitem{lemm}
F.~Lemmermeyer.
\newblock {\em Reciprocity laws}.
\newblock Springer Monographs in Mathematics. Springer-Verlag, Berlin, 2000.
\newblock From Euler to Eisenstein.

\bibitem{maynard}
J.~Maynard.
\newblock Primes represented by incomplete norm forms.
\newblock {\em Forum Math. Pi}, 8:e3, 2020.

\bibitem{michel}
P.~Michel.
\newblock Progr\`es r\'{e}cents du crible et applications (d'apr\`es {D}uke,
  {F}ouvry, {F}riedlander, {I}waniec).
\newblock Number 252, pages Exp. No. 842, 4, 185--209. 1998.
\newblock S\'{e}minaire Bourbaki. Vol. 1997/98.

\bibitem{milne}
J.~Milne.
\newblock Class field theory, 2020.
\newblock URL (Accessed 8.9.2020):
  \url{https://www.jmilne.org/math/CourseNotes/cft.html}.

\bibitem{milovic}
D.~Milovic.
\newblock On the 16-rank of class groups of {$\Bbb Q(\sqrt{-8p})$} for
  {$p\equiv-1 \mod4$}.
\newblock {\em Geom. Funct. Anal.}, 27(4):973--1016, 2017.

\end{thebibliography}
\bibliographystyle{abbrv}

\end{document}